\long\def\comment#1{}
\providecommand{\abs}[1]{\lvert \, #1 \, \rvert}
\newtheorem{theorem}{Theorem}
\newtheorem{lemma}{Lemma}
\newtheorem{proposition}{Proposition}
\newtheorem{corollary}{Corollary}
\def\qed{\unskip\nobreak\hfill\penalty50\hskip 3pt\hbox{}\nobreak
\hfill\hbox{\vrule width 4 pt height 10 pt}}
\newcommand{\const}{{\text{const.}}}
\newcommand{\op}{{\cal{L}}}
\newcommand{\f}{{\cal{F}}}
\newcommand{\B}{{\cal{B}}}
\newcommand{\R}{{\mathbb{R}}}
\newcommand{\no}{{\noindent}}
\newcommand{\Pro}{{\mathbb{P}}}
\begin{document}

\title{Return times distribution for Markov towers with decay of correlations}

\author{N T A Haydn\thanks{Mathematics Department, USC,
Los Angeles, 90089-1113. E-mail: $<$nhaydn@math.usc.edu$>$.
This work was partially supported by a grant from the NSF (DMS-0301910).}
and Y Psiloyenis\thanks{ E-mail: $<$psiloyen@usc.edu$>$.
This work was partially supported by a grant from the NSF (DMS-0301910).}}   

\date{\today} 

\maketitle

\begin{center}
{\Large Abstract}

\bigskip
\end{center}

In this paper we prove two results. First we show 
that dynamical systems with a $\phi$-mixing measure have in the limit 
Poisson distributed return times almost everywhere. We use the
Chen-Stein method to also obtain rates of convergence. Our theorem improves
on previous results by allowing for infinite partitions and dropping the requirement
that the invariant measure have finite entropy with respect to the given partition.
As has been shown elsewhere, the limiting distribution at periodic points
is not Poissonian (but compound Poissonian). Here we show that for all 
non-periodic points the return times are in the limit Poisson distributed. 
In the second part we prove that Lai-Sang Young's Markov Towers have 
Poisson distributed return times if the correlations decay for observables
that are H\"older continous and $\mathscr{L}^\infty$ bounded.

\section{Introduction}

Beginning with the Poincar\'e recurrence theorem, one of the main interests in studying deterministic dynamical systems has been to show
that the orbit of a typical point is on large timescales statistically regularly
distributed and orbit segments that are sufficiently separated are close to 
independently distributed.  In this paper we follow in this tradition and show
that for invariant measures that are $\phi$-mixing 
with respect to a possibly countably infinite partition the return times are
in the limit Poisson distributed. 

Interest in such questions go back to the 1940's when Doeblin~\cite{Doe} studied the 
Gauss map and its invariant measure. Later, in the 1970s Harris studied return times
for Markov processes and then around 1990 the interest of the return times
statistics became a central topic in dynamics. Using symbolic dynamics, Pitskel~\cite{Pit} 
proved for Axiom A maps the return times are in the limit Poisson distributed with
respect to equilibrium states for H\"older continuous potentials. Hirata~\cite{Hir1}
has a similar result using the Laplace transform which he then generalised later
in~\cite{Hir2}. Galves and Schmitt~\cite{GS} then came up with a 
technique to get results for the first entry or return time which they applied to 
$\psi$-mixing systems and where they also for the first time
provided error estimates. This method was then greatly extended by 
Abadi~\cite{A1,A2,A3} to $\phi$-mixing systems. Using a combinatorial argument 
improved error estimates were given in~\cite{AV1} for the first entry an return times
of $\phi$-mixing processes. For $\alpha$-mixing systems, the 
limiting entry and return times distribution was established in~\cite{AS}.
A combinatorial argument was used in~\cite{AV2,AV3}  to show
 that the limiting distribution is Poissonian for $\phi$-mixing measures if one takes the
  limit along a nested sequence of cylinders.
  In~\cite{LSV} multiple return times were shown to be Poisson distributed for 
  a class of intermittened systems.
 Recently Kifer has proven limiting results for 
simultaneous returns to cylinder sets, first~\cite{Kif11} an almost sure result using the Chen-Stein
method and then~\cite{Kif12} a complete classification with error terms.
Let us note that in~\cite{DGS} the Chen-Stein method was used to get the Poisson limiting
distribution for toral automorphisms where the limit is taken along sequences of ball-like sets.

Typically when entry times are Poisson distributed then so are the
return times. In fact, for arbitrary return or
entry times distribution there is a formula~\cite{HLV} that allows to translate the entry 
times distribution into the return times distribution and vice versa.

For attractors on manifolds (with $1$D unstable direction) which have a representation
by Young towers with exponentially decaying correlations,  Chazottes and Collet~\cite{CC}
 have shown that the entry times are Poisson distributed for the SRB measure. Here the 
 return sets are balls although the technique involves approximations by 
unions of cylinder sets.  Wasilewska~\cite{Was} extended this result to quite arbitrary measures
 on Young towers with polynomially decaying correlations. There, too, the return sets are 
 balls $B_\rho$ which are approximated by unions of cylinders. There the error terms
 decay with a negative power of $\abs{\log\rho}$. In particular for attractors this result
 applies  to SRB measures with polynomially decaying  correlations. See also~\cite{HW}.
 For an overview of distribution results of return times also see~\cite{H13}.

In this paper we consider maps that are $\phi$-mixing with respect to an invariant
measure and a partition which can be finite or countably infinite.
The purpose of the paper is threefold: (i) we devolop a more direct approach to
the method of Chen-Stein to obtain distribution results on return times, 
(ii)  the Poisson law we obtain is applicable to unions of cylinders rather than
single cylinder neighbourhoods, and (iii) we allow for infinite partitions and do not
require the entropy to be finite. Unlike the moment method which requires
the measure to have the stronger $\psi$-mixing property, the method of 
Chen-Stein requires us to only look at `two fold' mixing sets and this is what  makes it 
accessible to $\phi$-mixing measures. We also obtain rates of 
convergence. Since we show the limiting 
distribution for unions of cylinders whose total measures are required to decay at some rate,
this approach can be used to obtain limiting distribution results for metric balls in 
a metric space setting (Theorem~\ref{poisson.balls}).
Naturally we have to keep away from return sets that `look' periodic.
At periodic points the limiting distribution cannot be Poisson but is, as was
shown in~\cite{HV3}, compound Poisson distributed. In 
Corollary~\ref{returntimesdichotomy} we deduce that at all non-periodic points return
times are in the limit Poissonian.  

In Section~2 we set up the Chen-Stein method and then prove the main technical result
Proposition~\ref{mixingtheorem}. A similar method is used to prove 
Theorem~\ref{young.poisson}.
Most of the results of Sections~2 and~3 (in particular Theorem~\ref{maintheorem}
and Lemma~\ref{logsum}) also appeared in~\cite{Psi}.

In the second part (Section~4) of the paper we then look at Young towers and show that return and entry times
are in the limit Poisson distributed although we don't necessarily have the $\phi$-mixing
property for those systems. Since the invariant measure on a Young tower typically is 
not $\phi$-mixing (although it is $\alpha$-mixing), more delicate estimates are required
in order to obtain the limiting Poisson distribution along sequences of sets which are
unions of cylinders.

Let us note that it is crucial to select the return set to be some `regular' set like cylinders
 as Kupsa and Lacroix~\cite{KL,Lac}
have shown that any limiting distribution can be realised if one choses the return sets
appropriately. Also let us note that Kupsa has constructed an example of a symbolic
system over three elements which has positive entropy and whose first entry time
is not exponentially (with parameter one) distributed almost everywhere. 
This emphasises that despite
the plethora of existing results on the distribution of entry times, we cannot expect 
positive entropy systems to generically have Poisson distributed returns in the limit.

\section{Distribution for $\phi$-mixing systems}

Let $T$  be a map on $\Omega$ and $\mu$ a $T$-invariant probability measure on $\Omega$.
Let $\cal A$ be a finite or countably infinite measurable partition on $\Omega$. We put 
${\cal A}^n$ for its $n$th join $\bigvee_{j=0}^{n-1}T^{-j}{\cal A}$. We assume that the 
partition $\cal A$ is generating (i.e.\ the atoms of ${\cal A}^\infty$ consist of single points).

Throughout the paper we will assume that $\mu$ is  (right) {\em $\phi$-mixing}, that is there exists
a decreasing sequence $\phi(k)\rightarrow0$ (as $k\rightarrow\infty$) so that
$$
 \left|\frac{\mu(A\cap T^{-n-k}(B))}{\mu(B)}-\mu(A)\right|\le\phi(k)
$$
for all $A\in{\cal A}^n$, $B\in\sigma(\bigcup_{\ell\ge1}{\cal A}^\ell)$ ($\mu(B)>0$)
 and for all $n,k$ (see e.g.~\cite{D2}).
Let us note that there exists $\Lambda>0$ so that for any $n\in\mathbb{N}$ and $A\in{\cal A}^n$
one has $\mu(A)\le Ke^{-\Lambda n}$ for some constant $K$. For a proof of this fact see 
Abadi~\cite{A1} whose proof for
finite alphabets carries over to infinite alphabets without any change.

For a set $A\subset \Omega$ the {\em hitting time} $\tau_A: \Omega\to\mathbb N\cup\{\infty\}$
 is a random variable defined on the entire set $\Omega$ as follows
$$
\tau_A(x)=\inf \left\{k\ge1\colon T^k(x)\in A\right\}
$$
($\tau_A(x)=\infty$ if $T^kx\not\in A\;\forall k\in\mathbb{N}$).
If we narrow down the domain of $\tau_A$ to the set $A$ then $\tau_A$ is called the 
{\em return time} or first-return time.
According to Kac's theorem~\cite{K}  $\int_A\tau_A\,d\mu=1$ for any 
ergodic $T$-invariant probability measure $\mu$ and measurable $A\subset\Omega$ with
positive measure.
We then can define the induced map $\hat{T}_A:A\circlearrowleft$ given by 
$\hat{T}_A(x)=T^{\tau_A(x)}(x)\;\forall x\in A$,
and the $k^{th}$ return time $\tau_A^k$  by putting $\tau_A^1=\tau_A$
 ($k=1$) and forrecursively  $k>1$
$$
\tau_A^k(x)=\inf\left\{\ell>\tau_A^{k-1}(x)\colon T^\ell(x)\in A\right\}=\tau_A(\hat{T}_A^{k-1}(x))
$$
(for convenience we put $\tau_A^0=0$).
Following~\cite{AV3} the {\em period} of $A\subset \Omega$, under the map $T$, 
is defined to be
$$
r_A=\inf\{n \in\mathbb N | A\cap T^{-n}(A)\neq\emptyset\},
$$
or, equivalently, $r_A=\inf_{x\in A}\tau_A(x)$.
From the mixing property we conclude that $r_A\le\min\{\ell: \phi(\ell)<1\}$. 

For $A\in\sigma({\cal A}^n)$ (union of $n$-cylinders) let us define 
$$
\delta_A(j)=\min_{1\le w\le j\wedge n}\left\{\mu(A_{w}(A))+\phi(j-w)\right\},
$$ 
 where $A_{w}(A)\in\sigma({\cal A}^w)$ is smallest so that $A\subset A_{w}(A)$,
that is $A_w(A)=\bigcup_{B\in\mathcal{A}^w:\,B\cap A\not=\varnothing}B$.

\vspace{3mm}

\noindent {\bf Remark:} In a similar way one can define a measure $\mu$ to be 
{\em left $\phi$-mixing}\footnote{this is sometimes also called {\em reversed
$\phi$-mixing}.} if 
$$
 \left|\frac{\mu(A\cap T^{-n-k}(B))}{\mu(A)}-\mu(B)\right|\le\phi(k)
$$
for all $A\in{\cal A}^n$, $B\in\sigma(\bigcup_j{\cal A}^j)$ and $n,k$.
A right $\phi$-mixing measure is not necessarily also left $\phi$-mixing.
However the results in this paper on the distribution of return times (Theorems~1
and~2 and Corollary~1 and also Lemma~\ref{recurrenceestimates}) also
apply to left $\phi$-mixing systems since the techniques involved
are symmetric.
If the measure is left $\phi$-mixing then $\delta_A(j)$ has to be replaced by

$$
\hat\delta_A(j)=\min_{1\le w\le j\wedge n}\left\{\mu(A^{(w)}(A))+\phi(j-w)\right\}
$$
where $A^{(w)}(A)=T^{-(n-w)}T^{n-w}A\in\sigma(T^{-(n-w)}\mathcal{A}^w)$
is the smallest element in $\sigma(T^{-(n-w)}\mathcal{A}^w)$ which contains $A$
($w\le n$).

\begin{theorem}\label{maintheorem}
Let $\mu$ be a $T$-invariant  probability measure which is $\phi$-mixing with respect to 
a generating and at most countably infinite partition ${\cal A}$. 
Then there exists a constant $C_1$ so that
$$
 \left|\Pro\left(\tau_A^k>\frac{t}{\mu(A)}\right)-\sum_{i=0}^{k-1}e^{-t}\frac{t^i}{i!}\right|
\le C_1t (t\vee1)\inf_{\Delta>0}\left(\Delta\mu(A)+\sum_{j=r_A}^\Delta\delta_A(j)
+\frac{\phi(\Delta)}{\mu(A)}\right)|\log\mu(A)|.
$$
for all $k,n\in\mathbb{N}$ and $A\in\sigma({\cal A}^n)$.
\end{theorem}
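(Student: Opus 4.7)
The plan is to use the Chen--Stein method applied to the counting variable $W_N=\sum_{i=1}^{N}\mathbf{1}_A\circ T^i$ with $N=\lfloor t/\mu(A)\rfloor$, exploiting the identity $\{\tau_A^k>N\}=\{W_N<k\}$ and the fact that $\mathbb{E}W_N=\lambda:=N\mu(A)$ is within $\mu(A)$ of $t$. Chen--Stein approximation with a Poisson random variable $Z_\lambda$ of mean $\lambda$ gives, uniformly in $k$,
$$
\left|\Pro(W_N<k)-\Pro(Z_\lambda<k)\right|\le d_{TV}(W_N,Z_\lambda)\le b_1+b_2+b_3,
$$
while $\left|\Pro(Z_\lambda<k)-\sum_{i=0}^{k-1}e^{-t}t^i/i!\right|$ is absorbed into the stated error since $|\lambda-t|\le\mu(A)$. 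The three error terms are built from a dependency window $B_i=\{j\in[1,N]:0<|i-j|\le\Delta\}$ with $\Delta$ to be optimised later.

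The first Chen--Stein term $b_1=\sum_i\sum_{j\in B_i\cup\{i\}}p_ip_j$ is crude and immediately bounded by $N(2\Delta+1)\mu(A)^2\lesssim t\Delta\mu(A)$. The substantial work lies in the pair-interaction term
$$
b_2=\sum_i\sum_{j\in B_i}\mu(T^{-i}A\cap T^{-j}A)=2N\sum_{\ell=1}^{\Delta}\mu(A\cap T^{-\ell}A),
$$
which sees the fine structure of the union of cylinders $A$. I would split the range of $\ell$: for $\ell<r_A$ the intersection is empty by definition of the period; for $r_A\le\ell\le n$ the sets $A$ and $T^{-\ell}A$ share coordinates, so $\phi$-mixing cannot be applied directly and I would instead approximate $A$ from outside by its smallest cylinder envelope $A_w(A)\in\sigma(\mathcal{A}^w)$ and apply
$$
\mu(A\cap T^{-\ell}A)\le\mu(A\cap T^{-\ell}A_w(A))\le\mu(A)\bigl(\mu(A_w(A))+\phi(\ell-w)\bigr),
$$
then minimise over $w\le\ell\wedge n$ to obtain $\mu(A)\delta_A(\ell)$; for $\ell>n$ the two sets are already in mixing position and the same bound holds with $w=n$. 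Summing then yields $b_2\lesssim t\sum_{\ell=r_A}^{\Delta}\delta_A(\ell)$.

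The third Chen--Stein term $b_3$, which controls long-range dependence, is bounded directly by the $\phi$-mixing hypothesis across the temporal gap of size $\Delta$, giving $b_3\le N\phi(\Delta)=t\phi(\Delta)/\mu(A)$. Combining these three bounds and optimising over $\Delta$ produces the infimum appearing in the statement. The extra prefactor $t(t\vee1)|\log\mu(A)|$ comes from two sources: the parameter mismatch between $\lambda$ and $t$ in the Poisson tails, whose contribution scales as $t\vee1$, and a summation-by-scales argument — essentially Lemma~\ref{logsum} announced in the introduction — that converts the total-variation-type estimate on $W_N$ into the stated uniform-in-$k$ bound, the logarithm arising naturally when the time horizon $t/\mu(A)$ is broken into pieces whose lengths grow geometrically.

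The main obstacle is the estimate of $b_2$ in the regime $r_A\le\ell<n$, where $A$ and $T^{-\ell}A$ are coupled in their symbolic coordinates and no direct $\phi$-mixing bound is available. The coarser envelope $A_w(A)$, together with the minimisation built into $\delta_A(\ell)$, is exactly the device that resolves this, and is precisely what permits $A$ to be a countably infinite union of cylinders with no finite-entropy hypothesis. A secondary difficulty is that the passage from the count-level Chen--Stein bound on $W_N$ to the stated cumulative bound must be uniform in $k$, which is what ultimately forces the $|\log\mu(A)|$ factor via Lemma~\ref{logsum}.
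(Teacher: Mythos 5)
Your plan has the right spirit and captures the central device of the paper (the envelope approximation $A_w(A)$ that permits the $\phi$-mixing estimate to bite even when $A$ and $T^{-\ell}A$ share symbolic coordinates, encoded in $\delta_A$), but it routes through the Arratia--Goldstein--Gordon total-variation bound, which the paper deliberately avoids, and this creates two genuine problems.

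First, the estimate you give for the long-range term $b_3$ does not follow from the stated hypothesis. The AGG term
$b_3=\sum_i\mathbb{E}\bigl|\mathbb{E}\bigl(X_i-p_i\mid\sigma(X_j:\ j\notin B_i)\bigr)\bigr|$
conditions on the $\sigma$-algebra generated by indicators both to the \emph{left} and to the \emph{right} of $i$. But the measure is only assumed (right) $\phi$-mixing, and the paper emphasises that this is asymmetric: a right $\phi$-mixing measure need not be left $\phi$-mixing. With only a one-sided mixing inequality in hand you cannot condition simultaneously on both blocks and conclude $b_3\le N\phi(\Delta)$. This is precisely why the paper's Proposition~\ref{mixingtheorem} replaces the AGG decomposition by a bespoke one: $R_2$ is split into three parts $R_{2,1},R_{2,2},R_{2,3}$, each of which isolates a single left/right pair so that the one-sided $\phi$-mixing inequality applies, at the cost of working with the quantity $\xi_a=\max_i|\Pro(\{W_m^i=a\}\cap T^{-i}A)-\Pro(W_m^i=a)\mu(A)|$ for each individual level $a$ rather than with a total-variation distance directly.

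Second, your explanation of the $|\log\mu(A)|$ factor is inconsistent with the AGG route. If the AGG bound applied it would already give a bound on $d_{TV}(W_N,Z_\lambda)$ with no logarithm, which is uniform over all events, so there would be nothing to gain from a ``summation by scales.'' In the paper the logarithm is not an afterthought: it is produced by the identity
$\left|\Pro(W_m\in E)-\mu_0(E)\right|=\sum_{i=1}^m p_i\sum_{a=0}^m f(a+1)\epsilon_{a,i}$,
where $f$ is the Stein solution for $h=\chi_E$, together with the bound $|f(a)|\le(2+t)/a$ for $a>t$ of Lemma~\ref{logsum}; summing $1/a$ up to $m=[t/\mu(A)]$ is exactly where $\log m\sim|\log\mu(A)|$ enters, and the $t(t\vee1)$ prefactor comes from $\sum_i p_i=t$ combined with $\sum_a|f(a+1)|\le t+(2+t)\log(m/t)$, not from the $\lambda$ vs.\ $t$ mismatch. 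So the structure of the error constant is telling you that the argument is not a $d_{TV}$ argument: it is a levelwise Stein estimate where the cost of controlling $f$ over all $a$ is the logarithm. To repair your proposal you would essentially have to abandon AGG and reproduce the $R_1,R_2,R_3$ (and $R_{2,1},R_{2,2},R_{2,3}$) decomposition, at which point you are following the paper's proof.
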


\begin{theorem}\label{mainresult2}\cite{Psi}
Let  $\mu$ be a $\phi$-mixing $T$-invariant probability measure with respect to the
generating and at most countable infinite partition ${\cal A}$.
Let  $\eta\ge1$ be so that $n^\eta\phi(n)\rightarrow0$ as $n\rightarrow\infty$.
Let $K>0$. Then for  $A\in\sigma({\cal A}^n)$ a finite or infinite union of $n$-cylinders 
such that $|\log\mu(A)|\le Kn^\eta$ and $r_A>\frac{n}2$ the following applies:

\noindent {\textbf{(i) Exponential mixing rate:}}
Suppose $\phi(n)={\cal O}(\vartheta^{n})$, with $0<\vartheta<1$ and 
$\mu(A_w(A))=\mathcal{O}(\vartheta^w)$ for $w\le n$. Then there exists
 $\gamma=\gamma(\vartheta)>0$ 
and $C_2>0$ such that
\begin{equation}\label{exponential}
 \left|\Pro\left(\tau_A^k>\frac{t}{\mu(A)}\right)-\sum_{i=0}^{k-1}e^{-t}\frac{t^i}{i!}\right|
\le C_2t (t\vee1)e^{-\gamma n},\quad\forall t>0 \text{ and }\forall n\in\mathbb N.
 \end{equation}

\noindent {\textbf{ (ii) Polynomial mixing rate:}}
Suppose $\phi(n)={\cal O}(n^{-\beta})$ with $\beta>1+\eta$ and 
$\mu(A_w(A))=\mathcal{O}(w^{-\beta})$ for $w\le n$.
Then there exists $C_2>0$ such that 
\begin{equation}\label{polynomial}
 \left|\Pro\left(\tau_A^k>\frac{t}{\mu(A)}\right)-\sum_{i=0}^{k-1}e^{-t}\frac{t^i}{i!}\right|
\le C_2t (t\vee1)\frac{1}{n^{\beta-1-\eta}}, \quad\forall t>0 \text{ and } \forall n\in\mathbb N.
 \end{equation}
\end{theorem}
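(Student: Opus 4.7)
The approach is to derive Theorem~\ref{mainresult2} as a corollary of Theorem~\ref{maintheorem} by judiciously choosing the free parameter $\Delta$ appearing in the infimum. The hypothesis $r_A>n/2$ is used to shift the sum $\sum_{j=r_A}^{\Delta}\delta_A(j)$ past all small $j$, while $|\log\mu(A)|\le Kn^\eta$ controls the outer pre-factor.

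The first step is a sharp estimate of $\delta_A(j)=\min_{1\le w\le j\wedge n}\{\mu(A_w(A))+\phi(j-w)\}$. Under either mixing hypothesis both summands share a common decay profile, so the minimum is balanced near $w^*\approx j/2$ whenever this respects the constraint $w\le n$. This yields $\delta_A(j)=O(\vartheta^{j/2})$ in the exponential case and $O(j^{-\beta})$ in the polynomial case for $j\le 2n$; for $j>2n$ the constraint forces $w^*=n$, producing a tail term $O(\vartheta^{n}+\vartheta^{j-n})$ or $O(n^{-\beta}+(j-n)^{-\beta})$ respectively. Summing from $j=r_A>n/2$ onwards, the dominant contribution to $\sum_{j=r_A}^\Delta\delta_A(j)$ is $O(\vartheta^{n/4})$ in case (i) and $O(n^{1-\beta})$ in case (ii).

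For the exponential case, the natural choice $\Delta=2n$ makes each of $\Delta\mu(A)$, $\sum\delta_A(j)$ and $\phi(\Delta)/\mu(A)$ geometrically small, dominated by the $\delta_A$-sum at $O(\vartheta^{n/4})$. The factor $|\log\mu(A)|\le Kn^\eta$ contributes only a polynomial pre-factor in $n$, which is absorbed by slightly decreasing the exponent, yielding a bound of the form $Ce^{-\gamma n}$ for any $\gamma<\tfrac14|\log\vartheta|$. This gives (\ref{exponential}).

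For the polynomial case, $\Delta$ must be chosen to balance $\Delta\mu(A)$ against $\phi(\Delta)/\mu(A)$. When $\mu(A)$ is near its largest allowed value $O(n^{-\beta})$, the choice $\Delta=n^{1+\eta}$ makes each of the three terms $O(n^{-(\beta-1-\eta)})$, using $\beta>1+\eta$ to compare the exponents $1+\eta-\beta$, $1-\beta$, and $-\eta\beta$; in this regime $|\log\mu(A)|$ is only $O(\log n)$ and does not degrade the rate. When $\mu(A)$ is substantially smaller than $n^{-\beta}$, one takes $\Delta$ correspondingly larger so that Term~3 remains controlled, and the additional slack that then appears in $\Delta\mu(A)$ absorbs the larger pre-factor $|\log\mu(A)|\le Kn^\eta$. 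The main obstacle will be managing this trade-off uniformly across all admissible sizes of $\mu(A)$; the constraint $\beta>1+\eta$ is precisely what makes the optimum $n^{-(\beta-1-\eta)}$ attainable, yielding (\ref{polynomial}).
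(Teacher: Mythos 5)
Your overall strategy — specializing Theorem~\ref{maintheorem} by an explicit choice of $\Delta$, estimating $\delta_A(j)$ with $w\approx j/2$ for $j\le 2n$ and $w=n$ beyond, and using $r_A>n/2$ to start the sum past small $j$ — is the same as the paper's, and your handling of the $\delta_A$-sum (splitting off the $j>2n$ tail) is in fact more explicit than the paper's.

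The genuine gap is the choice $\Delta=2n$ in case~(i). The hypotheses only give $\mu(A)\ge e^{-Kn^\eta}$ (together with $\mu(A)\le C\vartheta^n$ from $\mu(A_n(A))=\mu(A)=O(\vartheta^n)$), so with $\Delta=2n$ the third term under the infimum satisfies $\phi(\Delta)/\mu(A)\sim\vartheta^{2n}e^{Kn^\eta}$, which is unbounded for any $\eta>1$, and already for $\eta=1$ once $K>2|\log\vartheta|$. The obstruction is therefore not the $|\log\mu(A)|$ pre-factor (which, as you say, is absorbable); it is that $\Delta$ must scale with $|\log\mu(A)|$ rather than with $n$. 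The paper's choice is $\Delta=(1+\epsilon)|\log\mu(A)|/|\log\theta|$, which makes $\theta^\Delta/\mu(A)=\mu(A)^\epsilon$; exponential decay in $n$ then follows from the \emph{upper} bound $\mu(A)\le C\vartheta^n$, while $|\log\mu(A)|$ is absorbed via $|\log x|=O(x^{-\delta})$. A milder version of the same issue appears in case~(ii): with $\Delta=n^{1+\eta}$ and $\mu(A)\sim n^{-\beta}$, the term $\Delta\mu(A)$ is already $\sim n^{-(\beta-1-\eta)}$, so the subsequent $O(\log n)$ factor from $|\log\mu(A)|$ does degrade the rate, contrary to what you claim. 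The balanced choice $\Delta=\mu(A)^{-2/(\beta+1)}$ gives $\mu(A)^{(\beta-1)/(\beta+1)}$, which, since $\mu(A)=O(n^{-\beta})$ and $\beta(\beta-1)/(\beta+1)>\beta-1-\eta$ for $\eta\ge1$, decays strictly faster than $n^{-(\beta-1-\eta)}$ and leaves room for the $\log$, while the middle term $n^{1-\beta}$ absorbs the full $n^\eta$ factor exactly; so the stated rate is attainable, but not with the $\Delta$ you chose.
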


\no\textbf{Remarks:}
\vspace{2mm}

\noindent {\bf(I)} The statements of these two theorems also apply to left $\phi$-mixing measures.
In this case however the quantity $\delta_A(j)$ in Theorem~\ref{maintheorem}  has to be replaced by $\hat\delta_A(j)$ and in Theorem~\ref{mainresult2} the decay rate for $\mu(A_w(A))$ has to 
apply to $\mu(A^{(w)}(A))$ instead.
Here  we present the proof in the case when $\mu$ is right $\phi$-mixing.

\vspace{2mm}
 
\noindent {\bf (II)}
The assumption of Theorem \ref{mainresult2} that the period $r_A$ be greater than $\frac{n}{2}$ can be substituted with any other number of the order of $n$. This assumption is in place to ensure that the reference cylinder $A$ does not exhibit a periodic behavior. By its very definition, the set $A$ consists of points that travel together for at least $n$ iterates of the map $F$. In view of this property if the set $A$ revisited itself too early on by the means of a single point $x$ that would have caused an entire neighborhood of $A$ to fall into $A$ at that same iterate. Considering the extreme case, if the entire set falls into $A$ at the same iterate of $F$ that renders $A$ periodic. In this case the set $A$ would act like a ``trap''. By asking that more time passes by before any of $A$'s points comes back to $A$ we ensure that the system is nearer to the time where the set will start spreading all over the space, by virtue of the mixing properties that govern the dynamics.
In particular for cylinders around periodic points the limiting distribution of return times is a compound Poissonian distribution~\cite{HV3}.

\vspace{2mm}

\noindent {\bf (III)}
 Commenting on the assumption that  $|\log\mu(A_n)|\le Kn^\eta$  recall that in the finite entropy case,
 when $H({\cal A})<\infty$, the theorem of Shannon-MacMillan-Breiman~\cite{M} implies that for a.e.\ point 
 $x\in\Omega$ there exists $C>0$ such that 
\begin{equation}\label{shannonderivative}
|\log\mu(A_n(x))|\le Cn \quad \forall n\in\mathbb N,
\end{equation}
i.e.\ $\eta=1$, where we denote by $A_n(x)$ the $n$-cylinder centered at $x$.
  On the other hand, if $H({\cal A})<\infty$ and $\eta>1$ then we can give a rough estimate
on the set of cylinders that don't satisfy the condition $|\log \mu(A)|\le Kn^\eta$. 
Denote by $B(n)\subset {\cal A}^n$ the set of all the $n$-cylinders  $A$ that satisfy $|\log\mu(A)|>Kn^\eta$.
Then, since $H({\cal A}^n)=\sum_{A\in{\cal A}^n}\mu(A)|\log\mu(A)|\le nH({\cal A})$, we 
obtain
$$
nH({\cal A})\ge\sum_{A_n\in B(n)}\mu(A_n)|\log\mu(A_n)|
\ge\sum_{A_n\in B(n)}Kn^{\eta}\mu(A_n)\\
=Kn^{\eta}\mu(B(n))
$$
which implies
$$
\mu(B(n))\le \frac{H({\cal A})}{Kn^{\eta-1}}\le \frac{c}{n^{\eta-1}}.
$$
This shows that for $\eta>1$ as $n$ increases the exception set, or ``bad'' set, gets smaller. 
The bigger the $\eta$ we choose the bigger coverage we achieve, where the estimates hold,
but making $\eta$ larger that has a direct effect on the error estimates. 
As pointed out above, Abadi's result does not allow us to choose $\eta$ to be less than
$1$.

\vspace{4mm}

\noindent In the remainder of this section we will look at the return times
distribution for cylinder sets.
Let $x\in\Omega$ and denote by $\pi_n=r_{A_n(x)}$ the period of the $n$-cylinder
neighbourhood $A_n(x)\in\mathcal{A}^n$.  Since
$A_{n+1}(x)\cap T^{j}A_{n+1}(x)\subset A_{n}(x)\cap T^jA_{n}(x)\;\forall n,j$, 
one sees that   $\pi_n$ is an increasing sequence which implies that either
$\pi_n\to\infty$ or $\pi_n$ converges to a limit $\pi_\infty$ (which is a function of $x$).

In the finite case, $\pi_\infty<\infty$, the point $x$ is a periodic point with period $\pi_\infty$.
This follows from the fact that $x\in A_n(x)\cap T^{\pi_\infty}A_n(x)$ for all $n$ large enough. 
Since $\mathcal{A}$ is generating, the periodicity of $x$ follows from taking a limit 
$n\to\infty$ as $\{x\}=\bigcap_nA_n(x)$. 
For $\psi$-mixing measures it was shown in~\cite{HV3} that the limiting distribution of
$\Pro\left(\tau_A^k>\frac{t}{\mu(A)}\right)$ converges to 
the P\'olya-Aeppli compound Poisson distribution. For the limiting first return-time distribution
at a periodic point a complete description for $\phi$-mixing measures was given in~\cite{AV3}
where it was shown that the density has a point mass at $t=0$ of weight 
$\lim_{n\to\infty}\mathbb{P}_{A_n(x)}(\tau_{A_n(x)}=\pi_\infty)$ and is exponential
otherwise. This generalises a result of Pitskel~\cite{Pit} for equilibrium states on 
Axiom~A systems.

In the infinite case, when $\pi_n\to\infty$ as $n\to\infty$, 
$x$ is non-periodic and we can estimate $\delta_A$ as follows:
$$
\delta_{A_n(x)}(j)=\inf_{0\le k\le j\wedge n}\{\mu(A_k(x))+\phi(j-k)\}
\le Ke^{-\Lambda(j\wedge n)/2}+\phi(j/2)
$$
($k=j/2$), where we used the property that $\mu(A_k(x))\le Ke^{-\Lambda k}$ ($\Lambda>0$).
Hence, with some  $c_1$,
$$
\mathcal{E}_n(\Delta)=\sum_{j=\pi_n}^\Delta\delta_{A_n(x)}(j)
\le c_1e^{-\Lambda(\pi_\infty\wedge n)/2}+\sum_{j=\pi_\infty}^\infty\phi(j/2)\longrightarrow0
$$
as $n\to\infty$ if we assume that $\phi(j)$ is summable. 
Also note that if $\phi$ is summable then we get that $\lim_{j\to\infty}j\phi(j)=0$.
Hence there exist a sequence $\Delta_n$, $n=1,2,\dots$, so that $\phi(\Delta_n)/\mu(A_n(x))\to0$
and also $\Delta_n\mu(A_n(x))\to0$ as $n\to\infty$.

As a consequence of Theorem~\ref{maintheorem} we thus have the following result:

\begin{corollary}\label{returntimesdichotomy}
Let $\mu$ be a $\phi$-mixing w.r.t.\ the generating partition ${\cal A}$ that is at most 
countably infinite.
Assume $\phi(j)$ is summable. If $x\in\Omega$ is not periodic, then
$$
\Pro\left(\tau_{A_n(x)}^k>\frac{t}{\mu(A_n(x))}\right)
\longrightarrow\sum_{i=0}^{k-1}e^{-t}\frac{t^i}{i!}
$$
as $n\to\infty$ for all $t>0$.
\end{corollary}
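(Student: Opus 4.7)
The plan is to apply Theorem~\ref{maintheorem} to the cylinders $A = A_n(x)$ and show that the error bound vanishes as $n\to\infty$, for any fixed $t>0$ and $k\in\mathbb{N}$. Two facts are immediate: since $x$ is non-periodic and $\mathcal{A}$ is generating, the periods $\pi_n = r_{A_n(x)}$ satisfy $\pi_n \to \infty$ (otherwise $\pi_n$ would stabilize at a finite value $\pi_\infty$, forcing $x$ to be $\pi_\infty$-periodic via $\{x\} = \bigcap_n A_n(x)$); and $\phi$-mixing provides the exponential bound $\mu(A_w(x)) \le K e^{-\Lambda w}$, so in particular $\mu(A_n(x)) \to 0$.

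First I would bound $\delta_{A_n(x)}(j)$. Since $A_n(x)$ is a single $n$-cylinder, $A_w(A_n(x)) = A_w(x)$ for every $w \le n$, so balancing the two pieces in the definition of $\delta_A$ (for instance by taking $w = \lfloor j/2\rfloor$ when $j \le 2n$, and $w = n$ otherwise) gives
$$\delta_{A_n(x)}(j) \le K e^{-\Lambda (j \wedge n)/2} + \phi(\lceil j/2\rceil).$$
Summability of $\phi$ together with $\pi_n \to \infty$ then forces $\sum_{j = \pi_n}^\infty \delta_{A_n(x)}(j) \to 0$ as $n \to \infty$, exactly the quantity $\mathcal{E}_n(\Delta)$ discussed before the corollary.

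Second, I would exhibit a cutoff $\Delta_n \to \infty$ satisfying both $\Delta_n \mu(A_n(x)) \to 0$ and $\phi(\Delta_n)/\mu(A_n(x)) \to 0$. Such a sequence exists because the decreasing summable sequence $\phi$ satisfies $j\phi(j) \to 0$, which opens a non-empty window in which $\Delta_n$ can be placed between a value where $\phi(\Delta_n)$ is already a small multiple of $\mu(A_n(x))$ and a value still small compared to $1/\mu(A_n(x))$. Substituting these three estimates into Theorem~\ref{maintheorem}, and using the exponential upper bound on $\mu(A_n(x))$ to tame the logarithmic prefactor $|\log\mu(A_n(x))| = O(n)$, drives the right-hand side to zero and yields the stated convergence.

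The main obstacle lies precisely in this calibration of $\Delta_n$: the three terms $\Delta \mu(A)$, $\sum_j \delta_A(j)$ and $\phi(\Delta)/\mu(A)$ push $\Delta$ in opposing directions, and the logarithmic prefactor further tightens the balance, so one has to refine the choice to make each piece decay faster than $1/|\log \mu(A_n(x))|$. Summability of $\phi$ (through $j\phi(j) \to 0$) combined with the exponential bound on cylinder measures is exactly what keeps the admissible window for $\Delta_n$ non-empty.
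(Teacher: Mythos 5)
Your approach mirrors the paper's own discussion preceding the corollary almost line for line: nonperiodicity forces $\pi_n\to\infty$, one bounds $\delta_{A_n(x)}(j)$ by $Ke^{-\Lambda(j\wedge n)/2}+\phi(j/2)$, summability drives $\mathcal{E}_n(\Delta)\to0$ and gives $j\phi(j)\to0$, and a cutoff $\Delta_n$ is chosen so that $\Delta_n\mu(A_n(x))\to0$ and $\phi(\Delta_n)/\mu(A_n(x))\to0$. However, the step where you claim to control the logarithmic prefactor runs in the wrong direction. You write that the exponential upper bound on $\mu(A_n(x))$ tames the prefactor to $|\log\mu(A_n(x))|=O(n)$; but $\mu(A_n(x))\le Ke^{-\Lambda n}$ gives $|\log\mu(A_n(x))|\ge\Lambda n-\log K$, a \emph{lower} bound of order $n$. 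To get $|\log\mu(A_n(x))|\le Kn$ you would need a lower bound $\mu(A_n(x))\ge e^{-Kn}$, which neither $\phi$-mixing nor the hypotheses of the corollary supply --- with a countably infinite alphabet the measure of $n$-cylinders along a fixed $x$ can shrink superexponentially.

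This is not cosmetic. Put $p_n=\mu(A_n(x))$ and $c_n=|\log p_n|\to\infty$. What Theorem~\ref{maintheorem} demands of $\Delta_n$ is $\Delta_n p_n c_n\to0$, $c_n\sum_{j\ge\pi_n}\delta_{A_n(x)}(j)\to0$ and $\phi(\Delta_n)c_n/p_n\to0$; what you verify (and what the paper's own sketch before the corollary verifies) are the same conditions with the factor $c_n$ deleted. For $\phi(j)=1/(j\log^2 j)$, which is decreasing and summable, the two are genuinely different: if $\Delta_n p_n c_n\to0$ then $\Delta_n\le 1/(p_nc_n)$ eventually, so $\log\Delta_n\le c_n$ and hence $\phi(\Delta_n)c_n/p_n=c_n/(p_n\Delta_n\log^2\Delta_n)\ge1/(p_n\Delta_nc_n)\to\infty$. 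So bare summability of $\phi$ does not by itself make the error bound of Theorem~\ref{maintheorem} vanish; to close the argument one needs either a polynomial or exponential decay rate for $\phi$ (as in Theorem~\ref{mainresult2}) or a supplementary upper bound on $|\log\mu(A_n(x))|$, and your appeal to the exponential \emph{upper} bound on cylinder measures cannot play that role.
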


\noindent This is sometimes expressed using the counting function 
$\zeta_A^t=\sum_{j=0}^m\chi_A\circ T^j$, where $m=[t/\mu(A)]$ and $\chi_A$ is 
the characteristic function of $A$. Then
$\mathbb{P}(\tau_A^k>t/\mu(A))=\sum_{i=0}^{k-1}\mathbb{P}(\zeta_A^t=i)$ and 
 the statement of the corollary reads
$$
\Pro\left(\zeta_{A_n(x)}^t=k\right)
\longrightarrow e^{-t}\frac{t^k}{k!}
$$
as $n\to\infty$ for all non-periodic $x\in\Omega$ and all $t>0$.
As remarked earlier, this result equally applies to left $\phi$-mixing measures.


\subsection{Application}

As an application of Theorem~\ref{mainresult2} we will indicate how one can obtain
the limiting distribution for metric balls for maps on metric spaces. 
We will still require that there be a generating partition with respect to which the measure
is $\phi$-mixing. The balls will then be approximated by unions
of cylinders. This approach was also used by Pitskel~\cite{Pit} for toral automorphisms
on $\mathbb{T}^2$ and  in~\cite{H00} for rational maps.

Let $T$ be a map on a metric space $\Omega$ and let $\mathcal{A}=\{A_j:j\}$ a generating finite
or countable infinite partition of $\Omega$, that is $\Omega=\bigcup_jA_j$ and
$A_j\cap A_i=\varnothing$ for $i\not= j$. 
As before we denote by $\mathcal{A}^n$ the $n$th joint of the partition.
Assume there is a $T$-invariant probability measure $\mu$ on $\Omega$. Then we put for
parameters $t>0$ and radii $\rho>0$
$$
\zeta_{B_\rho(x)}^t=\sum_{j=0}^{m}\chi_{B_\rho(x)}\circ T^j
$$
for the counting function of the returns to the metric ball $B_\rho(x)$ in the space $\Omega$,
where $m= [t/\mu(B_\rho(x)]$.

\begin{theorem}\label{poisson.balls}
Let $\mu$ be an invariant measure on the metric space $\Omega$ and suppose there
is a partition (finite or countably infinite) $\mathcal{A}$. Let $x\in\Omega$ and assume the
following conditions are satisfied:  \\
(i) $\mu$ is $\phi$-mixing with rate $\phi(k)$ decaying at least polynomially with power larger than $2$;\\
(ii) $\mbox{diam}(\mathcal{A}^n)$ decays exponentially fast  as $n\to \infty$;\\
(iii) There exists $w>1$ such that 
$\frac{\mu(B_{\rho+\rho^w}(x))}{\mu(B_\rho(x))}\longrightarrow1$ as $\rho\to0^+$ almost everywhere;\\
(iv) $\mu$ has finite and positive dimension almost everywhere;\\
(v) $r_{B_\rho(x)}\ge \mbox{const.}|\log \rho|$ for small enough $\rho$.

Then
$$
\mathbb{P}\left(\zeta_{B_\rho(x)}^t=k\right)\longrightarrow e^{-t}\frac{t^k}{k!}
$$
as $\rho\to0^+$ for almost every $x\in\Omega$ and $k\in\mathbb{N}_0$.
\end{theorem}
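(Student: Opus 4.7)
The plan is to sandwich $B_\rho(x)$ between two unions of $n$-cylinders, with $n=n(\rho)\asymp|\log\rho|$, apply Theorem~\ref{mainresult2}(ii) to both approximations, and then transfer the resulting Poisson limit to $B_\rho(x)$.

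Using condition~(ii), write $\mathrm{diam}(\mathcal{A}^n)\le c\vartheta^n$ for some $\vartheta\in(0,1)$. First I would choose $n=n(\rho)$ so that $c\vartheta^n\le\rho^w/2$, where $w>1$ is the exponent from~(iii); this forces $n\asymp|\log\rho|$. Set
$$
A^-_\rho=\bigcup\{Z\in\mathcal{A}^n:Z\subset B_\rho(x)\},\qquad
A^+_\rho=\bigcup\{Z\in\mathcal{A}^n:Z\cap B_\rho(x)\ne\varnothing\},
$$
so that $A^\pm_\rho\in\sigma(\mathcal{A}^n)$ and $A^-_\rho\subset B_\rho(x)\subset A^+_\rho\subset B_{\rho+\rho^w/2}(x)$. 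Condition~(iii), applied both at $\rho$ and at $\rho-\rho^w/2$, yields $\mu(A^+_\rho\setminus A^-_\rho)=o(\mu(B_\rho(x)))$ and hence $\mu(A^\pm_\rho)/\mu(B_\rho(x))\to 1$ as $\rho\to 0^+$.

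Next I would verify the hypotheses of Theorem~\ref{mainresult2}(ii) for $A=A^\pm_\rho$ with $\eta=1$. Assumption~(i) supplies the polynomial $\phi$-mixing rate with $\beta>2=1+\eta$. From~(iv), $\mu(B_\rho(x))\asymp\rho^d$ for some $d>0$, so $|\log\mu(A^\pm_\rho)|\le Cn$, i.e.\ $|\log\mu(A)|\le Kn$. The inclusions above imply $r_{A^-_\rho}\ge r_{B_\rho(x)}$ and $r_{A^+_\rho}\ge r_{B_{\rho+\rho^w/2}(x)}$, and both right-hand sides exceed $n/2$ by~(v), provided $w$ is chosen close enough to~$1$ relative to the constant there. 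For the remaining hypothesis $\mu(A_v(A^\pm_\rho))=\mathcal{O}(v^{-\beta})$ on cylinder levels $v\le n$ (I write $v$ for the cylinder level in place of the $w$ of Theorem~\ref{mainresult2}(ii), to avoid a clash with the exponent $w$ from~(iii)), I would note that $A_v(A^\pm_\rho)$ lies in the $c\vartheta^v$-neighbourhood of $A^\pm_\rho$, hence in $B_{2\rho+c\vartheta^v}(x)$, so~(iv) yields $\mu(A_v(A^\pm_\rho))=\mathcal{O}((\rho+\vartheta^v)^d)$, which is $\mathcal{O}(v^{-\beta})$ uniformly on $v\le n\asymp|\log\rho|$ for small $\rho$. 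Invoking~(\ref{polynomial}) then gives
$$
\mathbb{P}\bigl(\zeta^t_{A^\pm_\rho}=k\bigr)\longrightarrow e^{-t}\frac{t^k}{k!}
$$
for each $k\in\mathbb{N}_0$ and $t>0$.

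Finally I would transfer the limit from $A^\pm_\rho$ to $B_\rho(x)$. Let $m=[t/\mu(B_\rho(x))]$ and $m^\pm=[t/\mu(A^\pm_\rho)]$; then $m^+\le m\le m^-$ and the three products $m\mu(B_\rho(x))$, $m^\pm\mu(A^\pm_\rho)$ all converge to $t$. The counting functions $\zeta^t_{B_\rho(x)}$ and $\zeta^t_{A^\pm_\rho}$ differ by sums of $\chi_{A^+_\rho\setminus A^-_\rho}\circ T^j$ over an index range of length $\asymp m$, plus boundary terms of length $|m-m^\pm|$. By stationarity, their $L^1$ distance is bounded by $Cm\mu(A^+_\rho\setminus A^-_\rho)+C|m-m^\pm|\mu(A^\pm_\rho)\to 0$, so Markov's inequality forces $\mathbb{P}(\zeta^t_{B_\rho(x)}=k)$ to share the limit $e^{-t}t^k/k!$.

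The main obstacle will be this final quantitative comparison: one must simultaneously handle the drift of the time parameter (because $m$, $m^+$, and $m^-$ are distinct) and the vanishing but nonzero measure of the annular region $A^+_\rho\setminus A^-_\rho$, uniformly in $k$. A secondary, more routine, technicality is verifying the decay $\mu(A_v(A^\pm_\rho))=\mathcal{O}(v^{-\beta})$ with a constant uniform in $\rho$; the remaining hypotheses of Theorem~\ref{mainresult2}(ii) then follow from~(i)--(v) essentially by direct substitution.
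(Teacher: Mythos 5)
Your proposal is correct and follows essentially the same route as the paper: approximate $B_\rho(x)$ by unions of $n$-cylinders with $n\asymp|\log\rho|$, invoke Theorem~\ref{mainresult2}(ii) with $\eta=1$ (using (i) for the mixing rate, (iv) for $|\log\mu|\lesssim n$, and (v) together with $A\subset B$ implying $r_A\ge r_B$ for the period condition), and transfer the Poisson limit to the ball using (iii) to kill the annular error. The one genuine difference is that the paper uses only the outer approximation $C_{\rho,n}^t$ (your $A^+_\rho$) and then compares $\zeta^t_{B_\rho}$ with $\zeta^t_{C_{\rho,n}}$ directly, whereas you sandwich with both $A^-_\rho$ and $A^+_\rho$; the two-sided version is sound but buys nothing here and costs you the extra (minor) step of showing $\mu(A^-_\rho)/\mu(B_\rho(x))\to1$, which requires applying (iii) at the slightly smaller radius $\rho-\rho^w/2$. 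Your explicit accounting of the drift between $m$, $m^+$, $m^-$ in the final $L^1$ comparison is correct and in fact fills in a step the paper states rather tersely, and your verification of $\mu(A_v(A^\pm_\rho))=\mathcal{O}(v^{-\beta})$ on levels $v\le n$ via the dimension bound (iv) is a hypothesis the paper passes over without comment.
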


\noindent {\bf Proof.} We approximate the balls $B_\rho(x)$ by unions of cylinders. 
By assumption~(ii) there exists a $v\in(0,1)$ such that $\mbox{diam}(\mathcal{A})\le v^n$
(for $n$ large enough). 
Let $n=\left[w\frac{\log \rho}{\log v}\right]+1$, fix $x$ and denote by
$$
C_{\rho,n}^t=\bigcup_{A\in \mathcal{A}^n: A\cap B_\rho(x)\not=\varnothing}A
$$
the smallest union of $n$-cylinders that contains $B_\rho(x)$. By assumption~(iv)
we have $|\log \mu(B_\rho(x))|\le c_1|\log \rho|$ for some constant $c_1<\infty$
and consequently the sets $C_{\rho,n}^t\in\sigma(\mathcal{A}^n)$ satisfy the assumption of 
Theorem~\ref{mainresult2} for $\eta=1$. By assumption~(v) we have
$r_{B_\rho(x)}\ge\mbox{const.}n$ thus satisfying the short return times condition.
Hence we obtain by Theorem~\ref{mainresult2}
that $
\mathbb{P}\left(\zeta_{C_{\rho,n}}^t=k\right)\longrightarrow e^{-t}\frac{t^k}{k!}
$
as $\rho\to0$ (and $n\to\infty$).

By assumption~(iii) on the regularity of the measure $\mu$ we have 
$$
\left|\mathbb{P}\left(\zeta_{B_{\rho+\rho^w}}=k\right)
-\mathbb{P}\left(\zeta_{B_{\rho}}=k\right)\right|\le\left[\frac{t}{\mu(B_\rho(x))}\right]
\mu(B_{\rho+\rho^w}\setminus B_{\rho})\longrightarrow0
$$
as $\rho\to0$.
Since $B_\rho(x)\subset C_{\rho,n}^t\subset B_{\rho+v^n}(x)\subset B_{\rho+\rho^w}$
(as $v^n<\rho^w$)
we obtain $ \mathbb{P}\left(\zeta_{B_{\rho}(x)}^t=k\right)\longrightarrow e^{-t}\frac{t^k}{k!}$
\qed

\vspace{2mm}

\noindent {\bf Remarks:}

\vspace{2mm}

\noindent {\bf (I)} The requirement~(i) that $\mu$ is $\phi$-mixing appears somewhat
artificial, but it can occur in the following simple way: An Anosov map $T$ on a manifold $\Omega$
admits the construction of an arbitrarily fine Markov partition $\mathcal{A}$ which then can be used to 
model the dynamics of $T$ by the shift transform $\sigma$ a subshift of finite type $\Sigma$. 
The projection $\pi:\Sigma\to\Omega$ semiconjugates the shift transform 
$\sigma:\Sigma\circlearrowleft$ to the map $T:\Omega\circlearrowleft$; that is 
$\pi\circ\sigma=T\circ\pi$. A $\phi$-mixing measure
$\nu$ on $\Sigma$ then maps  to a $\phi$-mixing measure $\mu=\pi^*\nu$  on
$\Omega$. Theorem~3 then implies  that the limiting return times
distribution for metric balls is Poissonian (provided conditions~(iii)--(v) are met).

\vspace{2mm}

\noindent {\bf (II)} If $\Omega$ is a manifold and $\mu$ is an absolutely continuous measure then
 the regularity condition~(iii)
  $\frac{\mu(B_{\rho+\rho^w}(x))}{\mu(B_\rho(x))}\longrightarrow1$ as $\rho\to0^+$ 
 is satisfied everywhere for any $w>1$.
 
 \vspace{2mm}

\noindent  {\bf (III)} Condition~(v) on the short returns is satisfied for many measures. For instance
in~\cite{CC}, Lemma~4.1, it was shown that for the SRB measure on codimension one attractors
 with exponentially decaying tails there exists an $\mathfrak{a}>0$ so that the 
 measure of the set of very short returns
 $$
 \mathcal{V}_\rho =\{x\in \Omega: r_{B_\rho(x)}>\mathfrak{a}\abs{\log \rho}\}
 $$ 
 is bounded by  $\mu(\mathcal{V}_\rho)=\mathcal{O}(\rho^a)$ for some $a>0$. Although the proof
 uses Young towers it does not rely on the decay of corellations or a mixing property. This was 
 in~\cite{Was, HW} extended to invariant measures for more general maps that allow for 
 a Young tower construction with polynomially decaying tails where one gets
 the estimate $\mu(\mathcal{V}_\rho)=\mathcal{O}(\abs{\log\rho}^{-a})$ for some $a>0$.
 In both cases every point  $x\not\in\mathcal{V}_\rho$ satisfies condition~(v).
 
  \vspace{2mm}

\noindent  {\bf (IV)} The theorem cannot in general directly be applied to systems that 
are modelled by a Young tower since the invariant measure is only $\alpha$-mixing
 and not necessarily $\phi$-mixing (see equation~(\ref{young.mixing2})). 
 A more elaborate method will be used to exploit the $\mathscr{L}^1$ convergence 
 of the densities (see Theorem~\ref{young.poisson}).

\section{Proof of Theorem~\ref{maintheorem}}

\subsection{Short returns}

Abadi has shown that for $\phi$-mixing systems the measure of cylinder sets
decay exponentially, i.e.\ there are strictly positive constants $K$ and $\Lambda$ 
such that $\mu(A)\le Ke^{-\Lambda n}$
 for any integer $n\in\mathbb N$ and any $n$-cylinder $A$.
 Recall that $\delta_A(k)=\min_{1\le w<k}\left\{\mu(A_{w}(A))+\phi(k-w)\right\}$
 where $A_{w}(A)\in\sigma({\cal A}^w)$ is smallest so that $A\subset A_{w}(A)$.

Recall that the period $r_A$ of the set $A$ is defined as the smallest $j$ for which 
$A\cap T^{-j}(A)\neq\emptyset\}$.

\begin{lemma}\label{recurrenceestimates}
$\mathbb{P}_A(\tau_A\le t)\le \sum_{j=r_A}^{t}\delta_A(j)$.
\end{lemma}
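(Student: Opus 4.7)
The plan is to unfold the conditional probability and bound the set $\{\tau_A\le t\}$ by a union of pullbacks of $A$, then apply $\phi$-mixing to each pullback after enlarging $A$ into an element of a coarser join.

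First I would write
\[
\mathbb{P}_A(\tau_A\le t)=\frac{1}{\mu(A)}\,\mu\Bigl(A\cap\bigcup_{j=1}^{t}T^{-j}A\Bigr)
\le\frac{1}{\mu(A)}\sum_{j=1}^{t}\mu(A\cap T^{-j}A).
\]
For $j<r_A$ the definition of the period gives $A\cap T^{-j}A=\varnothing$, so the sum effectively runs from $j=r_A$ to $t$.

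Next, for each $j$ in this range I would fix an integer $w$ with $1\le w\le j\wedge n$ and use the trivial inclusion $A\subset A_w(A)$ to get $\mu(A\cap T^{-j}A)\le\mu(A_w(A)\cap T^{-j}A)$. Since $A_w(A)$ is a disjoint union of $w$-cylinders $C_i\in\mathcal{A}^w$, and since the $\phi$-mixing hypothesis applied to each $C_i\in\mathcal{A}^w$ and the set $T^{-j}A$ (noting that $j=w+(j-w)$ and $j-w\ge 0$) yields
\[
\mu(C_i\cap T^{-j}A)\le\bigl(\mu(C_i)+\phi(j-w)\mu(C_i)/\mu(A)\bigr)\mu(A),
\]
summing over $i$ gives
\[
\mu(A_w(A)\cap T^{-j}A)\le\bigl(\mu(A_w(A))+\phi(j-w)\bigr)\mu(A).
\]
(The summation is legitimate even for countable partitions because all terms are nonnegative.) Optimising over $w\in[1,j\wedge n]$ then yields $\mu(A\cap T^{-j}A)\le\delta_A(j)\,\mu(A)$, and dividing by $\mu(A)$ and summing over $j$ from $r_A$ to $t$ gives the claim.

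The argument is essentially bookkeeping: the only point that needs care is that the $\phi$-mixing bound is stated for a single cylinder in the first coordinate, whereas I want to use it with $A_w(A)$, a union of $w$-cylinders. This is handled cleanly by decomposing $A_w(A)$ into its constituent $w$-cylinders and adding the mixing inequalities, since the right-hand side is linear in $\mu(C_i)$. I do not anticipate any real obstacle beyond this observation.
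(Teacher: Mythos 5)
Your overall strategy matches the paper's: express $\mathbb{P}_A(\tau_A\le t)$ via a union bound, drop the terms with $j<r_A$ by the definition of the period, enlarge $A$ on the past side to $A_w(A)$, invoke $\phi$-mixing across the gap of length $j-w$, and optimize over $w$. However, the step you flag as ``the only point that needs care'' contains a genuine error, and it is exactly where the argument breaks. The paper's (right) $\phi$-mixing hypothesis applied to a $w$-cylinder $C$ and the future set $A$ gives
\[
\mu\bigl(C\cap T^{-j}A\bigr)\le\mu(C)\,\mu(A)+\phi(j-w)\,\mu(A),
\]
with the error term $\phi(j-w)\,\mu(A)$ proportional to the measure of the \emph{future} set $A$, \emph{not} to $\mu(C)$. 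Your claimed intermediate bound, $\mu(C_i\cap T^{-j}A)\le\mu(C_i)\mu(A)+\phi(j-w)\mu(C_i)$, has the error scaled by $\mu(C_i)$ — that is what left $\phi$-mixing (or $\psi$-mixing) would give, but it is not what the hypothesis says. Consequently the claim that ``the right-hand side is linear in $\mu(C_i)$'' is false: with the correct per-cylinder bound, summing over the (possibly countably many) $w$-cylinders $C_i$ making up $A_w(A)$ yields $\mu(A_w(A))\mu(A)+N\,\phi(j-w)\mu(A)$, where $N$ is the number of cylinders, and $N$ can be infinite. And even granting your per-cylinder bound, its sum over $i$ is $\mu(A_w(A))\mu(A)+\phi(j-w)\mu(A_w(A))$, which is $\ge\bigl(\mu(A_w(A))+\phi(j-w)\bigr)\mu(A)$ because $A\subset A_w(A)$, so the inequality you want does not follow even on your own terms.

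The correct route — and what the paper actually does — is to apply the right $\phi$-mixing bound \emph{directly} to $A_w(A)\in\sigma(\mathcal{A}^w)$ as the past set, giving $\mu(A_w(A)\cap T^{-j}A)\le\bigl(\mu(A_w(A))+\phi(j-w)\bigr)\mu(A)$ in one step, with no cylinder decomposition. This is legitimate: the standard formulation of $\phi$-mixing (as in Doukhan~\cite{D2}, which the paper cites) is for arbitrary sets in the past $\sigma$-algebra $\sigma(\mathcal{A}^n)$, and the paper's ``$A\in\mathcal{A}^n$'' in the definition should be read in that spirit — indeed the paper's own proof applies the mixing inequality to $A_{w_j}(A)$, a union. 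Once that is accepted, the rest of your argument (union bound, period, optimization over $w$) goes through and the lemma follows.
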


\begin{proof} For numbers $w_j\le j$ we have
\begin{eqnarray*}
\mu\left(A\cap\left\{\tau_A\le t\right\}\right)&
=&\sum_{j=r_A}^{t}\mu\left(A\cap\left\{\tau_A=j\right\}\right)\\
&\le&\sum_{j=r_A}^{t}\mu\left(A_{w_j}(A)\cap T^{-(n-j)}A\right)\\
&\le&\sum_{j=r_A}^{t}\mu(A)\delta_A(j)
\end{eqnarray*}
using the right $\phi$-mixing property and optimising for $w_j$.
The result how follows.
\end{proof}

\noindent In the same way one proves that
$\mathbb{P}_A(\tau_A\le t)\le \sum_{j=r_A}^{t}\hat\delta_A(j)$
if $\mu$ is left $\phi$-mixing since then 
$\mu\left(A\cap\left\{\tau_A=j\right\}\right)
\le\mu\left(A\cap T^{-(n-j)}A^{(w)}(A)\right)\le\hat\delta_A(j)$ for the optimal choice
of $w\in[1,n]$.

\subsection{The Stein method}

Stein's method of proving limiting theorems was first introduced by Stein~\cite{Ste}
for the Central Limit Theorem and then subsequently developed for the Poisson
 distribution~\cite{BC1,BC2}. As mentioned before this method has been used
in dynamics several times: Abadi~\cite{A3} used it by way of a result in~\cite{AGG}
to obtain the Poisson distribution for cylinder sets in $\phi$-mixing systems.
Denker, Gordin and Sharova~\cite{DGS} used the Chen-Stein method to 
obtain the Poisson distribution for limiting return times to ball-like sets 
for torus maps. Their approach involved extensive use of harmonic analysis.
Here we develop a more practical  approach that does not use~\cite{AGG} and
 does not require the target set to be a single cylinder, but could possibly be an infinite
 union of cylinders. Also, since entropy does not play any role, this approach works
 for infinite entropy systems and infinite alphabets. In the following we give a 
short description of the method as it is relevant for our purpose.

Let $\mu$ be a probability measure on $\mathbb{N}_0$ which is equipped with the 
power $\sigma$-algebra $\B_{\mathbb{N}_0}$. Additionally we denote by
$\mu_0$ the Poisson-distribution measure with mean $t$, i.e.\
$\Pro_{\mu_0}(\{k\})=\frac{e^{-t}t^k}{k!}$  $\forall k\in \mathbb{N}_0$.
Also let $\f$  be the set of all real-valued functions on  $\mathbb{N}_0$. The Stein 
operator ${\cal S}:\f\rightarrow\f$ is defined by
\begin{equation}\label{steinoperator}
{\cal S}f(k)=tf(k+1)-kf(k),\quad\text{ }  \forall k\in \mathbb{N}_0.
\end{equation}
The Stein equation 
\begin{equation}\label{steineq}
{\cal S}f=h-\int_{\mathbb{N}_0}h\,d\mu_0
\end{equation}
for the Stein operator in (\ref{steinoperator}), has a solution $f$ for each 
$\mu_0$-integrable $h\in\f$ (see~\cite{BC1}). 
The solution $f$ is unique except for $f(0)$, which can be chosen arbitrarily. 
Moreover $f$ can be computed recursively from the Stein equation, namely~\cite{BC1}:
\begin{eqnarray}
f(k)&=&\frac{(k-1)!}{t^k}\sum_{i=0}^{k-1} \left( h(i)-\mu_0(h)\right)\frac{t^i}{i!}\label{steinrepresentation1}\\
&=&-\frac{(k-1)!}{t^k}\sum_{i=k}^{\infty} \left( h(i)-\mu_0(h)\right)\frac{t^i}{i!} , 
\quad\text{} \forall k\in\mathbb{N} \label{steinrepresentation}
\end{eqnarray}
In particular,  if $h:\mathbb{N}_0\rightarrow\mathbb{R}$ is bounded then so is the associated Stein solution $f$.

\begin{proposition}~\cite{BC1}
A probability measure $\mu$ on $(\mathbb{N}_0,\B_{\mathbb{N}_0})$ is Poisson 
(with parameter $t$)
 if and only if 
$$
\int_{\mathbb{N}_0}{\cal S}f\,d\mu=0 \quad\text{for all bounded functions  } f:\mathbb{N}_0\rightarrow\R.
$$
\end{proposition}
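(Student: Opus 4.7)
The proposition has a clean two-direction structure and I would handle each direction separately, neither requiring new machinery beyond what the excerpt already provides.

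For the forward implication, I would proceed by direct calculation. Assuming $\mu$ is Poisson with parameter $t$ and $f$ bounded, expand
$$
\int_{\mathbb{N}_0}\mathcal{S}f\,d\mu=\sum_{k=0}^{\infty}\bigl(tf(k+1)-kf(k)\bigr)\frac{e^{-t}t^k}{k!}
=e^{-t}\sum_{k=0}^{\infty}f(k+1)\frac{t^{k+1}}{k!}-e^{-t}\sum_{k=1}^{\infty}f(k)\frac{t^k}{(k-1)!}.
$$
A shift of index $j=k+1$ in the first sum makes it identical to the second, so the two series telescope to $0$. Boundedness of $f$ (together with the fast decay of $t^k/k!$) is what lets us rearrange the series freely, so this step is essentially automatic.

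For the converse, I would exploit the Stein equation itself rather than work with indicator test functions. Take any bounded $h:\mathbb{N}_0\to\mathbb{R}$ and let $f$ be the associated Stein solution given by the recursion (\ref{steinrepresentation1}), (\ref{steinrepresentation}). The excerpt already records that $f$ is bounded when $h$ is, so $f$ is an admissible test function for our hypothesis. Integrating the Stein equation $\mathcal{S}f=h-\mu_0(h)$ against $\mu$ gives
$$
0=\int_{\mathbb{N}_0}\mathcal{S}f\,d\mu=\int_{\mathbb{N}_0}h\,d\mu-\mu_0(h),
$$
so $\mu$ and $\mu_0$ agree on every bounded $h$. Choosing $h=\chi_{\{j\}}$ for each $j\in\mathbb{N}_0$ shows that $\mu(\{j\})=e^{-t}t^j/j!$, hence $\mu=\mu_0$.

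The only point that requires any care is checking that the Stein solution $f$ is genuinely bounded for every bounded $h$, because without that we cannot substitute $f$ into our hypothesis. Since the paper cites~\cite{BC1} for exactly this fact, I would simply appeal to it; otherwise I would verify it from the explicit formulas (\ref{steinrepresentation1})--(\ref{steinrepresentation}) by splitting at $k\approx t$ and bounding each series by $\|h-\mu_0(h)\|_\infty$ times a Poisson tail. I do not anticipate any real obstacle here; the proposition is a textbook characterization and the main content, for our later use, is the existence and boundedness of the Stein solution, which is already stated prior to the proposition.
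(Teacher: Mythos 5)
Your proof is correct, and since the paper simply cites~\cite{BC1} for this proposition without reproducing an argument, there is no in-text proof to compare it against; what you give is the standard Stein characterization from Barbour--Chen. Both directions are sound: the forward direction is a clean index-shift cancellation (justified by absolute convergence, since $\sum_k \|f\|_\infty t^{k+1}/k!<\infty$ and $\mu_0$ has finite mean so $\sum_k k\,|f(k)|\,e^{-t}t^k/k!<\infty$), and the converse correctly feeds the Stein solution $f_h$ for arbitrary bounded $h$ back into the hypothesis, using the boundedness of $f_h$ recorded just before the proposition and the fact that $\mathcal{S}f_h=h-\mu_0(h)$ is itself bounded, so the integral against an arbitrary $\mu$ is unambiguously defined. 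Taking $h=\chi_{\{j\}}$ to pin down the point masses is the right final step. The one thing worth making explicit, though it does not affect your argument, is that the hypothesis ``$\int\mathcal{S}f\,d\mu=0$ for all bounded $f$'' tacitly presupposes $\mathcal{S}f$ is $\mu$-integrable (the term $kf(k)$ need not be, for arbitrary bounded $f$, unless $\mu$ has finite mean); your converse sidesteps this cleanly by only ever invoking the hypothesis for Stein solutions, where $\mathcal{S}f$ is bounded, and your forward direction has finite mean for free since $\mu=\mu_0$.
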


A probability measure $\mu$ on $(\mathbb{N}_0,\B_{\mathbb{N}_0})$ which approximates 
the  Poisson distribution $\mu_0$ can be estimated as follows:
\begin{equation}\label{finalformstein}
|\mu(E)-\mu_0(E)|=\left | \int_{\mathbb{N}_0}{\cal S}f\,d\mu\right |
=\left | \int_{\mathbb{N}_0}\left (t f(k+1)-kf(k)\right)d\mu\right |
\end{equation}
where $E\subset\mathbb{N}_0$ and  $f$ is the Stein solution that corresponds to the 
indicator function $\chi_E$.  Sharp bounds for the quantity on the right-hand side
 of (\ref{finalformstein}) is what one is after when the Stein method is used for Poisson approximation.

\begin{lemma}\label{logsum}
For the Poisson distribution $\mu_0$, the Stein solution of the Stein equation ($\ref{steineq}$) that corresponds to the indicator function $h=\chi_E$, with $E\subset\mathbb{N}_0$, satisfies
\begin{equation}\label{estimatessteinsolution}
\left|f_{\chi_E}(k)\right|\le
\begin{cases}
    1  \quad&\text{ if } k \le t\\
     \frac{2+t}{k} \quad&\text{ if } k >t\;.
\end{cases}
\end{equation} 
In particular
\begin{eqnarray}
\sum\limits_{i=1}^{m}\left|f_{\chi_E}(i)\right|
&\le& \begin{cases} m \quad&\text{if } m\le t \\
t+(2+t)\log\frac{m}{t} \quad&\text{if } m>t\;.
\end{cases}
\end{eqnarray}

\end{lemma}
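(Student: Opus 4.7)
The plan is to work with the two explicit representations \eqref{steinrepresentation1} and \eqref{steinrepresentation} of the Stein solution for $h=\chi_E$, choosing whichever one is better suited to the regime of $k$ in question. Throughout I would use only the trivial pointwise bound $|\chi_E(i)-\mu_0(\chi_E)|\le 1$; no finer information about $E$ is needed.

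In the range $k\le t$ I would start from \eqref{steinrepresentation1} and pull the absolute value inside to get
\[
|f_{\chi_E}(k)|\le \frac{(k-1)!}{t^k}\sum_{i=0}^{k-1}\frac{t^i}{i!}.
\]
After the substitution $j=k-i$ the right side becomes $\frac{1}{t}\sum_{j=1}^{k}\prod_{\ell=1}^{j-1}\frac{k-\ell}{t}$; every factor is $\le 1$ because $k\le t$, so the whole expression is at most $k/t\le 1$, which yields the first case of \eqref{estimatessteinsolution}.

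In the complementary range $k>t$ I would switch to representation \eqref{steinrepresentation}. The decisive observation is that for $i\ge k>t$ the ratio of successive Poisson weights $t/(i+1)$ is bounded by $t/(k+1)<1$, so the tail is dominated by a geometric series,
\[
\sum_{i=k}^{\infty}\frac{t^i}{i!}\le \frac{t^k}{k!}\cdot\frac{k+1}{k+1-t},
\]
giving $|f_{\chi_E}(k)|\le \frac{k+1}{k(k+1-t)}$. The main technical point of the proof is the elementary algebraic check that $\frac{k+1}{k(k+1-t)}\le \frac{2+t}{k}$ (equivalently $(1+t)(k+1)\ge t(2+t)$), which holds for every integer $k\ge t$ and delivers the second case of \eqref{estimatessteinsolution}.

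For the summed estimate I would split the index set at $\lfloor t\rfloor$. If $m\le t$, the pointwise bound $|f_{\chi_E}(i)|\le 1$ already gives $\sum_{i=1}^{m}|f_{\chi_E}(i)|\le m$. If $m>t$, I would combine the two cases to obtain
\[
\sum_{i=1}^{m}|f_{\chi_E}(i)|\le \lfloor t\rfloor + (2+t)\sum_{i=\lfloor t\rfloor+1}^{m}\frac{1}{i}\le t+(2+t)\log\frac{m}{t},
\]
where the harmonic tail is estimated by the standard integral comparison $\sum_{i>a}1/i\le \int_{a}dx/x$. Apart from the algebraic inequality flagged above, all steps are direct manipulations of the representations \eqref{steinrepresentation1} and \eqref{steinrepresentation}.
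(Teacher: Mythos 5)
Your proof is correct. For the regime $k\le t$ you reindex the finite sum $\sum_{i=0}^{k-1}t^i/i!$ and bound each resulting summand by $1/t$, whereas the paper instead observes that $t^j/j!$ is increasing up to $j\approx t$ and bounds the sum by $k$ times its largest term $t^{k-1}/(k-1)!$ — both give $k/t\le 1$ and are essentially the same amount of work. The genuine difference is in the $k>t$ case: the paper factors out $t^k/k!$, writes the tail as $1+\sum_{i\ge 1}\prod_{j=1}^i\frac{t}{k+j}$, and then splits this sum at $i\approx t$, crudely bounding the tail terms by $(1/2)^{i-t}$ to get $1+t+1=2+t$. You instead bound the whole Poisson tail $\sum_{i\ge k}t^i/i!$ directly by a geometric series with ratio $t/(k+1)$, obtaining the sharper intermediate bound $\frac{k+1}{k(k+1-t)}$, and then reduce the claim to the clean algebraic inequality $(1+t)(k+1)\ge t(2+t)$, which for $k\ge t$ follows from $(1+t)(k+1)\ge(1+t)^2>t(2+t)$. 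Your geometric-ratio argument is tidier and avoids the somewhat loose $(1/2)^{i-t}$ step in the paper. The summed estimate (split at $\lfloor t\rfloor$, integral comparison for the harmonic tail) is the same as the paper's; note that both versions are slightly imprecise for non-integer $t$ (the split leaves $\lfloor t\rfloor+(2+t)\log(m/\lfloor t\rfloor)$, not literally $t+(2+t)\log(m/t)$), but this is a bounded discrepancy the unspecified constant in Theorem~\ref{maintheorem} absorbs, so it is not a substantive gap.
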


\begin{proof}

We consider the two cases (i)~$k>t$ and (ii)~$k\le t$.

(i) $k>t$:
For $h=\chi_{E}$, from the representation ($\ref{steinrepresentation}$) for the Stein solution we have 
$$
 f_{\chi_E}(k) =-\frac{(k-1)!}{t^k}\sum_{i=k}^{\infty} \left( h(i)-\mu_0(h)\right)\frac{t^i}{i!}.
$$
Therefore,
\begin{eqnarray}\label{boundedsteinsol}
\left| f_{\chi_E}(k)\right| &\le&\frac{(k-1)!}{t^k}\sum_{i=k}^{\infty} \left| h(i)-\mu_0(h)\right|\frac{t^i}{i!}\notag\\
&\le& \frac{(k-1)!}{t^k}\sum\limits_{i=k}^{\infty} \frac{t^i}{i!}\notag\\
&=& \frac{(k-1)!}{t^k}\frac{t^k}{k!}\left(1+\sum\limits_{i=1}^{\infty}\frac{t}{k+1}\frac{t}{k+2}\dots\frac{t}{k+i}\right).
\end{eqnarray}
If $i>t$ then each term in the infinite sum in $(\ref{boundedsteinsol})$ is no greater than 
$(\frac{1}{2})^{i-t}$. If $i\le t$, all terms in the sum in  $(\ref{boundedsteinsol})$ are clearly no greater than 1. Hence
$$
\left| f_{\chi_E}(k)\right| 
\le\frac{(k-1)!}{t^k}\frac{t^k}{k!}\left(1+t +\sum\limits_{i=1}^{\infty} \left(\frac{1}{2}\right)^i\right)
=\frac{2+t}{k}.
$$

(ii) $k\le t$:
Using the alternative representation ($\ref{steinrepresentation1}$) for the Stein solution $f_{\chi_E}$, 
this time, we get
$$
|f_{\chi_E}(k)| \le\frac{(k-1)!}{t^k}\sum_{i=0}^{k-1} \left| h(i)-\mu_0(h)\right|\frac{t^i}{i!}
\le\frac{(k-1)!}{t^k}\sum_{i=0}^{k-1} \frac{t^i}{i!} 
\le\frac{(k-1)!}{t^k}\frac{t^{k-1}}{(k-1)!}k
\le1
$$
as the sequence $\{\frac{t^j}{j!}\}_{j\in\mathbb N}$ is increasing for $j\le t$ and decreasing for $j>t$. This completes the proof of  inequality~$(\ref{estimatessteinsolution})$.
The second statement is now obvious for $m\le t$. On the other hand if 
$m>t$ then it follows from the inequality
$\sum_{i=t+1}^{m}\frac{1}{i}\le \log\frac{m}{t}$.
\end{proof}

\subsection{Return times distribution}

Now we want to approximate  the function 
$\Pro(\tau_A^k\le m)$ for all $k\ge 1$ and all $m\in \mathbb{R}^+$.
Let $A\in\sigma({\cal A}^n)$ and denote by $W_m(x)$ the number of visits of the orbit 
$\left\{T(x),T^2(x),\dots ,T^{[m]}(x)\right\}$ to the set $A$, i.e. 
$$
W_{[m]}(x)=\sum\limits_{j=1}^{[m]}\chi_A(T^j(x))
$$
where $\chi_A$ is the characteristic function of the set $A$ that is
$\chi_A(x)=1$ if $x\in A$ and $\chi_A(x)=0$ otherwise (and $[m]$ is the integer part of $m$).
 Then
$$
\Pro(\tau_A^k\le [m])=1-\Pro(\tau_A^k>[m])=1-\Pro(W_{[m]}<k)               
$$

Therefore, our problem of approximating the distribution of $\tau^k_A$ becomes equivalent to approximating the distribution of $W_{m}$ for all $m\in\mathbb{N}$. 
The Poisson parameter $t$ is the expected value of $W_m$ (i.e.\ $t=\mu(W_m)$).
If we put $p_i= \mu(T^{-i}A)=\mu(A)\quad\forall i=1,2,\dots$, then 
$$
t=\mu(W_m)=\sum\limits_{i=1}^m\mu\left(\chi_AT^i\right)=\sum_{i=1}^{m} p_i=m\mu(A)
$$
i.e.\ $m=[t/\mu(A)]$.
If $h=\chi_E$ with $E$ an arbitrary subset of the positive integers, $E\subset\mathbb{N}_0$,
then we obtain from~(\ref{finalformstein})
$$
\left |\int_{\mathbb{N}_0}{\cal S}f\,d\mu\right |
=\left | \int_{\mathbb{N}_0} hd\mu-\int_{\mathbb{N}_0} hd\mu_0\right | =\left|\Pro(W_m\in E)-\mu_0(E)\right|
$$
and in turn, since the Stein Operator ${\cal S}$ for the Poisson distribution is given by
$(\ref{steinoperator})$,  we obtain
$$
\left|\Pro(W_m\in E)-\mu_0(E)\right|=\left|E\left(t f(W_m+1)-W_mf(W_m)\right)\right|
\quad\forall E\subset \mathbb{N}_0.
$$
Notice that the difference $\left|\Pro(W_m\in E)-\mu_0(E)\right|$ above gives exactly the error of the Poisson approximation. We hence estimate
\begin{align}\label{errornewrepresentation}
\left|\Pro(W_m\in E)-\mu_0(E)\right|
 &=\left|t\mathbb{E}f(W_m+1)-\mathbb{E}\left(\sum_{i=1}^{m}I_if(W_m)\right) \right|\notag\\  
&=\left|\sum_{i=1}^{m}p_i\mathbb{E}f(W_m+1)-\sum_{i=1}^{m}p_i\mathbb{E}(f(W_m)|I_i=1)\right|\notag\\
&=\left|\sum_{i=1}^{m}p_i\left(\mathbb{E}f\left(W_m+1\right)-\mathbb{E}\left(f(W_m)|I_i=1\right)\right)\right|\notag\\
&=\sum_{i=1}^{m}p_i\left(\sum_{a=0}^{m}f(a+1)\Pro(W_m=a)-\sum_{a=0}^{m}f(a)\Pro(W_m=a|I_i=1)\right)\notag\\
&= \sum_{i=1}^{m}p_i\sum_{a=0}^{m}f(a+1)\epsilon_{a,i},
\end{align} 
where we put $I_i(x)=\chi_AT^i(x)$ the characteristic function of the set $T^{-i}A$ and
\begin{equation}\label{errorterm1}
\epsilon_{a,i}=\left|\Pro(W_m=a)-\Pro(W_m=a+1|I_i=1)\right|.
\end{equation}
The function $f$ above is the solution of the Stein equation~$(\ref{steineq})$ that 
corresponds to the indicator function $h=\chi_E$ in the Stein method. In fact bounds on
$f$ have been obtained in Corollary~$\ref{logsum}$.

Now, in view of the new representation for $\left|\Pro(W_m\in E)-\mu_0(E)\right|$
 we need to look at the term $\epsilon_{a,i}$ more closely. 
 If we put $W_m^i=W_m-\chi_A\circ T^i$ then
 the mixing condition yields the following estimates on $\epsilon_{a,i}$:
\begin{eqnarray*}
\epsilon_{a,i}&=&\left|\Pro(W_m=a)-\Pro(W_m=a+1|I_i=1)\right| \\
                    &=&\left|\Pro(W_m=a)-\frac{\Pro\left(\{W_m^i=a\}\cap T^{-i}A\right)}{\mu(A)}\right|\\   
                   &=&\left|\Pro(W_m=a)-\frac{\Pro(W_m^i=a) \mu(A)+\epsilon'_{a,i}}{\mu(A)}\right|\\
&\le&\left|\Pro(W_m=a)-\Pro(W_m^i=a)\right| +\frac{\xi_a}{\mu(A)}   \label{xi_alpha},                  
\end{eqnarray*}
where $\epsilon'_{a,i}=\Pro\left(\{W_m^i=a\}\cap T^{-i}A\right)-\Pro(W_m^i=a) \mu(A)$
($\epsilon'_{a,i}=0$ if all $I_j$ are independent) and
 $\xi_a=\max_i\left|\Pro(\{W_m^i=a\}\cap T^{-i}A)-\Pro(W_m^i=a) \mu(A)\right|$. 
The bound on $\epsilon_{a,i}$ has two terms, the first of which is
$$
\left|\Pro(W_m=a)-\Pro(W_m^i=a)\right|\le \Pro(I_i=1)=\mu(A).
$$
The second term, which contains $\xi_a$, is the error due to dependence for which we get 
estimates in Proposition~$\ref{mixingtheorem}$ below.

\begin{proposition} \label{mixingtheorem}
There exists a positive constant $C$ so that for all $n\in\mathbb{N}$ and
for all $A\in\sigma({\cal A}^n)$  the following estimate holds true 
$$
\bigg|\Pro\left(\{W_m^i=a\}\cap T^{-i}A\right)-\Pro(W_m^i=a)\mu(A)\bigg|
\le C\mu(A)\inf_{\Delta>0}\left( \Delta \mu(A)+\sum_{j=r_A}^\Delta\delta_A(j)
+\frac{\phi(\Delta)}{\mu(A)}\right)
$$
where $W_m=\sum_{j=1}^m\chi_A\circ T^j$ and $W_m^i=\sum_{\substack{1\le j\le m \\ j\neq i}}\chi_A\circ T^j$.
\end{proposition}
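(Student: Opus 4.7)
The proof decouples the indicator $\chi_{T^{-i}A}$ from the event $\{W_m^i=a\}$ using the right $\phi$-mixing property. The obstruction is that the sum defining $W_m^i$ involves iterates $T^j$ for times $j$ close to $i$, whose cylinders live on coordinates that overlap, or are too close to, those of $T^{-i}A$ for mixing to apply directly. My plan has three stages: truncate the sum near $i$, control the truncation error, and apply the mixing inequality twice to what remains.

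Fix $\Delta\ge n$ and split
$$W_m^i=L+R+D,\qquad L=\sum_{j=1}^{i-n-\Delta}I_j,\quad R=\sum_{j=i+n+\Delta}^{m}I_j,$$
with $D$ collecting the ``dangerous'' middle iterates $j\neq i$ satisfying $|j-i|<n+\Delta$. Then $L$ is measurable on coordinates $[1,i-\Delta-1]$, the set $T^{-i}A$ on $[i,i+n-1]$, and $R$ on coordinates $\ge i+n+\Delta$, so each adjacent pair of blocks is separated by a gap of at least $\Delta$.

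I would first estimate the truncation error. Using stationarity, $\mathbb{E}D\le 2(n+\Delta)\mu(A)$, so by Markov's inequality $|\Pro(W_m^i=a)-\Pro(L+R=a)|\le 2(n+\Delta)\mu(A)$, which after multiplying by $\mu(A)$ contributes the $\Delta\mu(A)^2$ term of the final bound. On the event $T^{-i}A$, a nonzero contribution from $D$ forces the orbit based at a point of $A$ to revisit $A$ within $n+\Delta$ iterates (in the past or in the future); applying invariance and the same cylinder--mixing estimate that drives Lemma~\ref{recurrenceestimates} to each $\mu(A\cap T^{-\ell}A)\le\mu(A)\delta_A(\ell)$ yields
$$\Pro\bigl(\{D\neq 0\}\cap T^{-i}A\bigr)\le 2\mu(A)\sum_{\ell=r_A}^{n+\Delta}\delta_A(\ell),$$
which is the $\sum\delta_A$ contribution.

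For the decoupling, I expand $\{L+R=a\}=\bigsqcup_{a'+a''=a}\{L=a'\}\cap\{R=a''\}$ and apply the right $\phi$-mixing inequality twice: first to separate the earlier event $\{L=a'\}$ from the later event $T^{-i}A\cap\{R=a''\}$, and then to separate the earlier event $T^{-i}A$ from the later event $\{R=a''\}$. Each step introduces an error of $\phi(\Delta)$ times the measure of the later factor, so
$$\Pro\bigl(\{L=a'\}\cap T^{-i}A\cap\{R=a''\}\bigr)=\mu(A)\Pro(L=a')\Pro(R=a'')+E(a',a''),$$
with $|E(a',a'')|\le C\phi(\Delta)\Pro(R=a'')$. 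Summing over $a'+a''=a$ and using $\sum_{a''}\Pro(R=a'')\le 1$ collapses the total decoupling error to $O(\phi(\Delta))$, which contributes $\mu(A)\cdot\phi(\Delta)/\mu(A)$ to the final bound. Assembling the three pieces and absorbing $n$ into $\Delta$ (the $\Delta\ge n$ regime is the only relevant one, since otherwise $\phi(\Delta)/\mu(A)$ dominates and the estimate is trivial) gives the proposition. The delicate point --- and what makes right $\phi$-mixing the correct hypothesis rather than something blockwise --- is that the sum over $a',a''$ in the decoupling step must not inflate the $\phi(\Delta)$ factor by the range of $W_m^i$; this is exactly why the mixing error has to land on the marginal $\Pro(R=a'')$, which sums to $1$.
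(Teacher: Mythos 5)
Your proof takes essentially the same route as the paper's: insert a gap of width $\sim\Delta$ around time $i$, then control (a) the chance of hits inside the gap conditioned on $T^{-i}A$, (b) the chance of hits inside the gap unconditionally, and (c) the mixing error once the gap is removed --- these are exactly the paper's $R_1$, $R_3$, $R_2$. Two small remarks on bookkeeping rather than substance. First, in the decoupling stage you factor $\Pro(\{L=a'\}\cap T^{-i}A\cap\{R=a''\})\approx\mu(A)\Pro(L=a')\Pro(R=a'')$ via two mixing applications; to compare with $\mu(A)\Pro(L+R=a)=\mu(A)\sum_{a'+a''=a}\Pro(\{L=a'\}\cap\{R=a''\})$ you still need a third application to factor $\Pro(\{L=a'\}\cap\{R=a''\})\approx\Pro(L=a')\Pro(R=a'')$, which is the paper's $R_{2,3}$; it collapses the same way, with the error landing on $\Pro(R=a'')$ and summing to $O(\phi(\Delta))$, so no harm done --- but you should state it. Second, for the backward half of the ``dangerous'' middle on $T^{-i}A$, the paper constructs an explicit $T$-equivariant bijection between disjoint decompositions to prove $b_i^-=b_i^+$ and then quotes Lemma~\ref{recurrenceestimates}, whereas you simply push each $\mu\bigl(T^{-j}A\cap T^{-i}A\bigr)=\mu\bigl(A\cap T^{-|j-i|}A\bigr)$ through stationarity and sum; the two are equivalent (the lemma itself is a union bound) and yours is arguably the shorter route. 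Your choice to make the gap of width $n+\Delta$ so that the mixing separations are exactly $\Delta$ is cleaner than the paper's, which quietly absorbs the $n$-offset into the constant.
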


\begin{proof}
Let $\Delta<\!\!<m$ be a positive integer (the halfwith of the gap) and put for every $i\in(0,m]$
\begin{align}W_m^{i, -}&=\sum\limits_{j=1}^{i-(\Delta+1)}\chi_A\circ T^j, &
W_m^{i, +}&=\sum\limits_{j=i+\Delta+1}^{m}\chi_A\circ T^j, \nonumber\\
 U_m^{i, -}&=\sum\limits_{j=i-\Delta}^{i-1}\chi_A\circ T^j, &
U_m^{i,+}&=\sum\limits_{j=i+1}^{i+\Delta}\chi_A\circ T^j, \nonumber\\
 U_m^{i}&= U_m^{i, -} + U_m^{i,+}, &
 \tilde{W_m^{i}}&= W_m^i-U_m^{i} =W_m^{i, -} + W_m^{i, +} \nonumber
 \end{align}
with the obvious modifications if $i<\Delta$ or $i>m-\Delta$.
With these  partial sums we distinguish between the hits that occur near  the $i^{th}$ iteration, namely $U_m^{i,-}$ and  $U_m^{i,+}$, and the hits that occur away from the $i^{th}$ iteration, namely
$ W_m^{i, -}$ and $ W_m^{i, +}$.

The `gap' of length $2\Delta+1$ allows us to use the mixing property in the terms $W_m^{i,\pm}$
 and its size will be determined later by optimising the error term. 

We then have, for $0\le a \le m-1$, $a\in\mathbb N_0$, that
\begin{eqnarray*}
\Pro(\{W_m=a+1\}\cap T^{-i}A)&=&\Pro(\{W_m^i=a\}\cap T^{-i}A) \\
&=&\sum_{\substack{\vec{a}=(a^-,a^{0,-},a^{0,+},a^+)\\ \text{s.t } |\vec{a}|=a}}
\Pro\big(\{W_m^{i, \pm}=a^\pm\}\cap \{U_m^{i,\pm}=a^{0,\pm}\}\cap T^{-i}A \big)                                      
\end{eqnarray*}
 (intersection of five terms). For $0\le a\le m-1$ we have
$$
\bigg| \Pro\left(\{W_m^i=a\}\cap T^{-i}A\right)-\Pro\left(W_m^i=a\right)\mu(A)\bigg|\le R_1+R_2+R_3
$$
and will estimate the three terms
\begin{eqnarray*}
R_1&=&\left|\Pro\left(\{W_m^i=a\}\cap T^{-i}A\right)-\Pro\left(\{\tilde{W_m^i}=a\}\cap T^{-i}A\right)\right|\\
R_2&=&\left|\Pro\left(\{\tilde{W_m^i}=a\}\cap T^{-i}A\right)-\Pro\left(\tilde{W_m^i}=a\right)\Pro\left(I_i=1\right)\right|\\
R_3&=&\left|\Pro\left(\tilde{W_m^i}=a\right)-\Pro\left(W_m^i=a\right)\right|\mu(A)
\end{eqnarray*}
separately as follows.

\vspace{3mm}

\noindent\textbf{Estimate of $R_1$: } Here we show that short returns are rare when 
conditioned on $T^{-i}A$. Observe that
\begin{eqnarray*}
\{W_m^i=a\}\cap T^{-i}A
&\subset& \left(\{\tilde{W_m^i}=a\}\cap T^{-i}A\right)\cup\left(\{U_m^i>0\}\cap T^{-i}A\right)\\
\{\tilde{W_m^i}=a\}\cap T^{-i}A&\subset& \left(\{W_m^i=a\}\cap T^{-i}A\right)
\cup\left(\{U_m^i>0\}\cap T^{-i}A\right).
\end{eqnarray*}
Since $U_m^i>0$ implies that either $U_m^{i,+}>0$ or $U_m^{i,-}>0$ we get
$$
\big|\Pro\big(\{W_m^i=a\}\cap T^{-i}A\big)-\Pro\big(\{\tilde{W_m^i}=a\}\cap T^{-i}A\big)\big|
\le \Pro\big( \{U_m^i>0\}\cap T^{-i}A\big)\le b^-_i+b^+_i
$$
where
$$
b^-_i=\Pro\big( \{U_m^{i,-}>0\}\cap T^{-i}A\big)\quad\text{and}\quad
b^+_i=\Pro\big( \{U_m^{i,+}>0\}\cap T^{-i}A\big).
$$
We now estimate the two terms,  $b^-_i$ and $b^+_i$, separately as follows:

\noindent {\bf (i)} Estimate of $b^+_i$:
By  Lemma~\ref{recurrenceestimates}
  \begin{eqnarray*}
 b_i^+&=&\Pro\big( \{U_m^{i,+}>0\}\cap T^{-i}A\big)\\
 &= & \Pro(U_m^{i,+}>0|I_{i}=1)\mu(A)\\
  &=&\Pro_A(\tau_A\le\Delta)\mu(A)\\
 &\le& C\mu(A)\sum_{j=r_A}^\Delta\delta_A(j).
  \end{eqnarray*}

\noindent {\bf (ii)} Estimate of $b^-_i$:
 If $U_m^{i,-}>0$ then $\{{U_m^{i,-}>0}\}\subset \bigcup_{k=1}^{\Delta}T^{-(i-k)}A$ and therefore
$$
 \Pro\left( \{U_m^{i,-}>0\}\cap T^{-i}A\right)
 \le\mu\left(T^{-i}A\cap\bigcup_{k=1}^{\Delta}T^{-(i-k)}A\right)
$$
 We show the following symmetry
$$
\mu\left(T^{-i}A\cap\bigcup_{k=1}^\Delta T^{-(i-k)}A\right)
=\mu\left(T^{-i}A\cap\bigcup_{k=1}^\Delta T^{-(i+k)}A\right)
$$
For that purpose let $S_i=\bigcup_{k=1}^\Delta J_{i,k}$ where $J_{i,k}=T^{-i}A\cap T^{-(i-k)}A$
and similarly $\tilde{S}_i=\bigcup_{k=1}^\Delta \tilde{J}_{i,k}$,  $\tilde{J}_{i,k}=T^{-i}A\cap T^{-(i+k)}A$.
We now want to show that $\mu(S_i)=\mu(\tilde{S}_i)$.
We decompose $S_i$ into a disjoint union as follows:
$$
S_i= \bigcup_{k=1}^\Delta V_{i,k},
$$
where
$$
V_{i,k}= J_{i,k}\setminus\bigcup_{j=1}^{k-1}J_{i,k}\cap J_{i,j}.
$$
Then
$$
\mu(S_i)=\Pro\left({\dot{\bigcup}}_{k=1}^\Delta V_{i,k}\right)
           =\sum_{k=1}^\Delta \mu(V_{i,k}).
$$
Similarly, $\tilde{S}_i$ is the disjoint union of
 $\tilde{V}_{i,k}= \tilde{J}_{i,k}\setminus \bigcup_{j=1}^{k-1} \tilde{J}_{i,k}\cap\tilde{J}_{i,j}$,
 $k=1,\dots,\Delta$.
Then
$$
F^{-k}V_{i,k}=F^{-k}J_{i,k}\setminus\bigcup_{j=1}^{k-1}F^{-k}\left(J_{i,k}\cap J_{i,j}\right)
=\tilde J_{i,k}\setminus\bigcup_{j=1}^{k-1}\tilde J_{i,k}\cap \tilde J_{i,k-j}
=\tilde{V}_{i,k}.
$$
where we have used that
$F^{-k}J_{i,k}=\tilde J_{i,k}$ and $F^{-k}\left(J_{i,k}\cap J_{i,j}\right)=\tilde J_{i,k}\cap \tilde J_{i,k-j}, 0\le j\le k-1$.
Therefore, by the invariance of the measure $\mu(\tilde{V}_{i,k})=\mu(V_{i,k})$ and consequently
$$
\mu(S_i) =\sum_{k=1}^\Delta \mu(V_{i,k})
= \sum_{k=1}^\Delta \mu(\tilde{V}_{i,k})
= \mu(\tilde{S}_i)\label{symmetry}.
$$
We therefore obtain
$$
b^-_i = \mu\left( \bigcup_{k=1}^{\Delta}T^{-(i-k)}A \cap T^{-i}A\right)
 = \mu\left( \bigcup_{k=1}^{\Delta}T^{-(i+k)}A\cap T^{-i}A \right)
 =\Pro\left( \{U_m^{i,+}>0\} \cap T^{-i}A\right)
 =b^+_i
 $$

Combining~(i) and~(ii) yields
$$
R_1
\le C\sum_{j=r_A}^\Delta\delta_A(j).
$$

\vspace{3mm}

\noindent\textbf{Estimate of $R_3$: } Now we show that short returns are rare.
We proceed similarly to the estimate of $R_1$.
The set inclusions
\begin{eqnarray*}
\{W_m^i=a\}&\subset &\{\tilde{W_m^i}=a\}\cup\{U_m^i>0\}\\
\{\tilde{W_m^i}=a\}& \subset&\{W_m^i=a\}\cup\{U_m^i>0\}
\end{eqnarray*}
let us estimate
$$
\bigg| \Pro\left(\tilde{W_m^i}=a\right)-\Pro\left(W_m^i=a\right) \bigg|
\le \Pro\left(U_m^i>0 \right)
\le 2\Pro\left( \bigcup_{k=1}^{\Delta}\{I_{i+k}=1\} \right)
\le 2\Delta \mu(A).
$$
Hence
$$
R_3\le 2\Delta \mu(A)^2.
$$

\vspace{3mm}

\noindent\textbf{Estimate of $R_2$: } This is the principal term and the speed of mixing now becomes
relevant.
Recall that $ \tilde{W_m^{i}}(x)=W_m^{i, -}(x) + W_m^{i, +}(x)$ and
\begin{eqnarray*}
R_2&=&\bigg|\Pro\left(\{\tilde{W_m^i}=a\}\cap T^{-i}A\right)-\Pro\left(\tilde{W_m^i}=a\right)\mu(A)\bigg|\\
&=&\bigg|\sum_{\substack{\vec{a}=(a^-,a^+)\\ \text{s.t } |\vec{a}|=a}}\Pro\left(\{W_m^{i,\pm}=a^\pm\}
\cap T^{-i}A\right)
-\sum_{\substack{\vec{a}=(a^-,a^+)\\ \text{s.t } |\vec{a}|=a}}\Pro\left(W_m^{i,\pm}=a^\pm\right)
\mu(A)\bigg|.
\end{eqnarray*}
For each $\vec{a}=(a^-,a^+)$ for which $|\vec{a}|=a$ we have 
$$
\bigg|\Pro\left(\{W_m^{i,\pm}=a^\pm\}\cap T^{-i}A\right)-
\Pro\left(W_m^{i,\pm}=a^\pm\right)\mu(A)\bigg|
\le R_{2,1}+R_{2,2}+R_{2,3}
$$
where
\begin{eqnarray*}
R_{2,1}&=& \bigg|\Pro\left(\{W_m^{i,\pm}=a^\pm\}\cap T^{-i}A\right)-
\Pro\left(\{W_m^{i,+}=a^+\}\cap T^{-i}A\right)\Pro\left(W_m^{i,-}=a^-\right)\bigg|\\
R_{2,2}&=&\bigg|\Pro\left(\{W_m^{i,+}=a^+\}\cap T^{-i}A\right)-
\Pro\left(W_m^{i,+}=a^+\right)\mu(A)\bigg|\Pro\left(W_m^{i,-}=a^-\right)\\
R_{2,3}&=&\bigg|\Pro\left(W_m^{i,+}=a^+\right)\Pro\left(W_m^{i,-}=a^-\right)-
\Pro\left(W_m^{i,\pm}=a^\pm\right)\bigg|\mu(A).
\end{eqnarray*}
We now bound the three terms separately:

\noindent \textbf{Bounds for $R_{2,1}$: }
Due to the mixing property
$$
 \bigg|\Pro\left(\{W_m^{i,\pm}=a^\pm\}\cap T^{-i}A\right)
-\Pro\left(\{W_m^{i,+}=a^+\}\cap T^{-i}A\right)\Pro\left(W_m^{i,-}=a^-\right)\bigg|
\le\phi(\Delta)\Pro\left(W_m^{i,-}=a^-\right)
$$
we obtain
\begin{eqnarray*}
 \bigg|\sum_{\substack{\vec{a}=(a^-,a^+)\\ \text{s.t } |\vec{a}|=a}}\Pro\left(\{W_m^{i,\pm}=a^\pm\}
 \cap T^{-i}A\right)
-\sum_{\substack{\vec{a}=(a^-,a^+)\\ \text{s.t } |\vec{a}|=a}}\Pro\left(\{W_m^{i,+}=a^+\} \cap T^{-i}A\right)\Pro\left(W_m^{i,-}=a^-\right)\bigg|\hspace{-6cm}&&\\
&\le&\sum_{\substack{\vec{a}=(a^-,a^+)\\ \text{s.t } |\vec{a}|=a}}
 \phi(\Delta)\Pro\left(W_m^{i,-}=a^-\right)\\
&\le&\phi(\Delta).
\end{eqnarray*}

\noindent \textbf{Bounds for $R_{2,2}$: }
We have 
\begin{eqnarray*}
R_{2,2}&=&\Pro\left(W_m^{i,-}=a^-\right)\bigg|\Pro\left(\{W_m^{i,+}=a^+\}\cap T^{-i}A\right)
-\Pro\left(W_m^{i,+}=a^+\right)\mu(A)\bigg|\\
&\le&\phi(\Delta)\Pro\left(W_m^{i,-}=a^-\right)\mu(A)\\
\end{eqnarray*}
and therefore
\begin{eqnarray*}
 \sum_{\substack{\vec{a}=(a^-,a^+)\\ \text{s.t } |\vec{a}|=a}}\bigg|\Pro\left(\{W_m^{i,+}=a^+\}\cap T^{-i}A\right)\Pro\left(W_m^{i,-}=a^-\right)-
\Pro\left(W_m^{i,+}=a^+\right)\Pro\left(W_m^{i,-}=a^-\right)\mu(A)\bigg|\hspace{-8cm}&&\\
&\le&\sum_{\substack{\vec{a}=(a^-,a^+)\\ \text{s.t } |\vec{a}|=a}}
 \phi(\Delta)\Pro\left(W_m^{i,-}=a^-\right)\mu(A)\\
&\le&\phi(\Delta)\mu(A).
\end{eqnarray*}\\

\noindent \textbf{Bounds for $R_{2,3}$: }
Here we get 
$$
 \sum_{\substack{\vec{a}=(a^-,a^+)\\ \text{s.t } |\vec{a}|=a}} 
\bigg|\Pro\left(W_m^{i,+}=a^+\right)\Pro\left(W_m^{i,-}=a^-\right)
-\Pro\left(W_m^{i,\pm}=a^\pm\right)\bigg|\mu(A)\le\phi(2\Delta)\mu(A).
$$

Combining the estimates for $R_{2,1}, R_{2,2}$ and $R_{2,3}$ we obtain that 
$$
R_2\le R_{2,1}+R_{2,2}+R_{2,3}\le C\phi(\Delta)
$$

\vspace{3mm}

Finally, putting the error terms $R_1$, $R_2$ and $R_3$ together yields
$$
 \bigg|\Pro(\{W_m^i=a\}\cap T^{-i}A)-\Pro(W_m^i=a)\mu(A)\bigg|
  \le C\inf_{\Delta>0}\left(\mu(A)^2\Delta+\mu(A)\sum_{j=r_A}^\Delta\delta_A(j)
  +\phi(\Delta)\right),
$$
for some $C\in\mathbb R^+$ independent of $A$.
\end{proof}

\noindent {\bf Proof of Theorem~\ref{maintheorem}:} By Proposition~\ref{mixingtheorem}
$$
\xi_a\le C\inf_{\Delta>0}\left(\mu(A)^2\Delta+\mu(A)\sum_{j=r_A}^\Delta\delta_A(j)
  +\phi(\Delta)\right)
$$
and therefore
$$
\epsilon_{a,i}\le \mu(A)+\frac{\xi_a}{\nu(A)}
\le C\inf_{\Delta>0}\left(\mu(A)\Delta+\sum_{j=r_A}^\Delta\delta_A(j)
  +\frac{\phi(\Delta)}{\mu(A)}\right).
$$
Let us note that replacing the value $t$ by $t^*=\left[\frac{t}{\mu(A)}\right]\mu(A)$ results in an error of
order ${\cal O}(\mu(A))$.
With the new estimates for the error term $\epsilon_{a,i}$ in hand we can now use
Lemma~\ref{logsum} to obtain (as $\log m={\cal O}(|\log\mu(A)|)$ and $m=[t/\mu(A)]$)
with $E=\{0,1,\dots,k-1\}$:
$$
\left|\Pro\left(\tau_A^k>\frac{t}{\mu(A)}\right)- \sum_{i=0}^{k-1}e^{-t}\frac{t^i}{i!}\right|
\le Ct (t\vee1)\inf_{\Delta>0}\left(\mu(A)\Delta+\sum_{j=r_A}^\Delta\delta_A(j)
  +\frac{\phi(\Delta)}{\mu(A)}\right)\left|\log\mu(A)\right|.
$$
\qed

\vspace{6mm}

\noindent {\bf Proof of Theorem~\ref{mainresult2}:}
\no \textbf{ \textit{(i) Polynomial mixing: }} 
In the polynomial case where $\phi(k)={\cal O}(k^{-\beta})$ with some $\beta>2$ 
 we have  by assumption $\mu(A_{w})=\mathcal{O}(w^{-\beta})$ which implies that
$\delta_A(j)\le \mathcal{O}((\frac{j}2)^{-\beta})+\phi(\frac{j}2)=\mathcal{O}(j^{-\beta})$,
where we used $w=\frac{j}2$. This gives the estimate
$\sum_{j=r_A}^\Delta\delta_A(j)=\mathcal{O}(r_A^{-(\beta-1)})=\mathcal{O}(n^{-(\beta-1)})$
and consequently
$$
\left|\Pro\left(\tau_A^k>\frac{t}{\mu(A)}\right)-\sum_{i=0}^{k-1}e^{-t}\frac{t^i}{i!}\right|
\le Ct (t\vee1) \inf_{\Delta>0}\left(\Delta \mu(A)+\frac{1}{n^{\beta-1}}+\frac{\Delta^{-\beta}}{\mu(A)}\right)
\left|\log\mu(A)\right|.
$$
In order to optimise $\Delta$ put $\Delta=\frac{1}{\mu(A_n)^\omega}$ for some $\omega\in(0,1)$. 
Then we get
$$
 \inf_{\Delta>0}\left(\Delta \mu(A)+\frac{1}{n^{\beta-1}}+\frac{\Delta^{-\beta}}{\mu(A)}\right)
 \le\mu(A)^{1-\omega}+\frac{1}{n^{\beta-1}}+\mu(A)^{\beta\omega-1}.
$$
The best value for  $w\in(0,1)$ is $\omega=\frac{2}{\beta+1}$ and therefore
$$
 \inf_{0<\omega<1}\left(\mu(A)^{1-\omega}+\frac{1}{n^{\beta-1}}+\mu(A)^{\beta\omega-1}\right)
 \le 2\mu(A)^{\frac{\beta-1}{\beta+1}}+\frac{1}{n^{\beta-1}}
 \le \frac{C}{n^{\beta-1}}\quad\forall n\in\mathbb N,
$$
for some constant $C$. Since by assumption $|\log(\mu(A))|\le Kn^\eta$ we obtain
$$
 \inf_{0<\omega<1}\left(\mu(A)^{1-\omega}+\frac{1}{n^{\beta-1}}+\mu(A)^{\beta\omega-1}\right)
 |\log(\mu(A))| \le C \frac{1}{n^{\beta-1-\eta}}
$$
 for some $C>0$. Finally we obtain
 \begin{equation}\label{final-form}
 \left|\Pro\left(\tau_A^k>\frac{t}{\mu(A)}\right)-\sum_{i=0}^{k-1}e^{-t}\frac{t^i}{i!}\right|
\le Ct (t\vee1)\frac{1}{n^{\beta-1-\eta}}.
 \end{equation}

\vspace{3mm}

\no \textbf{ \textit{(ii) Exponential mixing:}} 
In this case  $\phi(k)={\cal O}(\vartheta^k)$ with $\vartheta<1$ which combined with the
 assumption $\mu(A_w(A))=\mathcal{O}(\vartheta^w)$ implies that
 $\delta_A(j)=\tilde\theta^j$ for some $\tilde\theta<1$ 
 (take e.g.\ $w=\min\{n,\frac{j}2\}$.
 Hence $\sum_{j=r_A}^\Delta\delta_A(j)=\mathcal{O}(\tilde\theta^{r_A})=\mathcal{O}(\theta^n)$ 
 for some $\theta<1$. Hence
\begin{equation}\label{preliminary2}
\left|\Pro\left(\tau_A^k>\frac{t}{\mu(A)}\right)-\sum_{i=0}^{k-1}e^{-t}\frac{t^i}{i!}\right|
\le Ct (t\vee1) \inf_{\Delta>0}\left(\Delta \mu(A)+\theta^n+\frac{\theta^\Delta}{\mu(A)}\right)
\left|\log\mu(A)\right|.
\end{equation}
In order to estimate the RHS let us put $\Delta=(1+\epsilon)\frac{|\log\mu(A)|}{|\log\theta|}$ 
for some $\epsilon>0$. Then
$$
  \inf_{\Delta>0}\left(\Delta \mu(A)+\theta^n+\frac{\theta^\Delta}{\mu(A)}\right)
  = (1+\epsilon)\frac{|\log\mu(A)|}{|\log\theta|}\mu(A)+\theta^n+\mu(A)^\epsilon
$$
and therefore
$$
\inf_{\Delta>0}\left(\Delta \mu(A)+\theta^n+\frac{\theta^\Delta}{\mu(A)}\right)
\left|\log\mu(A)\right| 
\le \left((1+\epsilon)\frac{|\log\mu(A)|}{|\log\theta|}\mu(A)+\theta^n+\mu(A)^\epsilon\right)|\log\mu(A)|.
$$
Since for any $\delta\in(0,1)$ $ |\log x|={\cal{O}}\left(\frac{1}{x^\delta}\right)$ as $ x\rightarrow 0^+$
we obtain $ |\log\mu(A)|\le C \frac{1}{\mu(A)^\delta}$
for some constant $C$ independent of $A$.
Hence, as the measure of cylinder sets decay exponentially fast we obtain
$$
  \inf_{\Delta>0}\left(\Delta \mu(A)+\theta^n+\frac{\theta^\Delta}{\mu(A)}\right)\left|\log\mu(A)\right|
  \le Ce^{-\gamma n}
$$
for some $\gamma>0$. Therefore
$$
  \left|\Pro\left(\tau_A^k>\frac{t}{\mu(A)}\right)-\sum_{i=0}^{k-1}e^{-t}\frac{t^i}{i!}\right|
\le Ct (t\vee1)e^{-\gamma n}.
$$
\hfill\qed


\section{Return Times on Markov Towers}

\subsection{Mixing Properties derived on the Markov Tower}

Let $F$ be a differentiable map on a manifold $M$ and $\Omega_0$ a subset of $M$.
As in~\cite{Y2,Y3} we assume that $\Omega_0$ is partitioned into sets $\Omega_{0,i}, i=1,2,\dots$ so that
there is a return time function $R:\Omega_0\rightarrow\mathbb{N}$ which is constant on the partition elements
$\Omega_{0,i}$ and which satisfies that $F^R$ maps $\Omega_{0,i}$ bijectively to the entire set $\Omega_0$.
Let us put $\Omega_{j,i}=\{(x,j): x\in\Omega_{0,i}\}$ for $j=0,1,\dots, R(\Omega_{0,i})-1$. 
The space $\Omega =\bigcup_{i=1}^\infty\bigcup_{j=0}^{R(\Omega_{0,i})-1}\Omega_{j,i}$ is called a {\em Markov tower} for the map $T$. It has the associated partition 
${\cal A}=\{\Omega_{j,i}:\; 0\le j<R(\Omega_{0,i}), i=1,2,\dots\}$ which typically is countably infinite.
 On the tower $\Omega$ we have the map $T$  which for  $x\in\Omega_{0,i}$ is given by
  $T(x,j)=(x,j+1)$ if $j<R(\Omega_{0,i})-1$ and $T(x,R(\Omega_{0,i})-1)=(F^{R(\Omega_{0,i})},0)$.
  
  For points $x,y\in\Omega_0$ one defines the
function $s(x,y)$ as the largest positive $n$ so that $(T^R)^jx$ and  $(T^R)^jy$ for $0\le j<n$
 lie in the same sub-partition elements, that is $(T^R)^jx, (T^R)^jy\in\Omega_{0,i_j}$ 
 for some $i_0,i_1,\dots,n-1$. 
 
 The space of H\"older continuous functions ${\cal C}_\gamma$ consists of all functions $\varphi$ 
 on $\Omega$ for which $|\varphi(x)-\varphi(y)|\le C_\varphi \gamma^{s(x,y)}$. The norm on ${\cal C}_\gamma$
 is $\|\varphi\|_\gamma=|\varphi|_\infty+C_\varphi$, where $C_\varphi$ is smallest possible.
  
 Let $\nu$ be a finite given `reference' measure on $\Omega$ and assume that the Jacobian
 $JT^R$ with respect to the measure $\nu$ is  H\"older continuous in the following sense:
  There exists a $\gamma\in(0,1)$ so that 
 $$
 \left|\frac{JT^Rx}{JT^Ry}-1\right|\le\mbox{\rm const} \gamma^{s(T^Rx,T^Ry)}
 $$
 for all $x,y\in \Omega_{0,i}$, $i=1,2,\dots$.

 If the return time $R$ is integrable with 
 respect to  $m$ then by~\cite{Y3} Theorem~1 there exists a $T$-invariant probability measure
 $\mu$ (SRB measure) on $\Omega$ which is absolutely continuous with respect to $\nu$. Moreover the 
 density function $h=\frac{d\mu}{d\nu}=\lim_{n\rightarrow\infty}\op^n\lambda$ is H\"older continuous,
 where $\lambda$ can be any initial density distribution in ${\cal C}_\gamma$.
 The transfer operator $\op:{\cal C}_\gamma\rightarrow{\cal C}_\gamma$ is defined by
 $\op\varphi(x)=\sum_{x'\in T^{-1}x}\frac{\varphi(x')}{JT(x')}$, $\varphi\in{\cal C}_\gamma$,
 and has the property that $\nu$ is a fixed point of its adjoint, i.e.\ $\op^*\nu=\nu$.
  In \cite{Y3} Theorem~2(II) the $\mathscr{L}^1$-convergence was proven:
\begin{equation}\label{convergence}
\|\op^k\lambda-h\|_{\mathscr{L}^1}\le p(k) \|\lambda\|_\gamma
\end{equation}
where the `decay function' $p(k)={\cal O}(k^{-\beta})$ if the tail decays polynomially with power $\beta$,
that is if $\nu(R> j)\le \const j^{-\beta}$.
If the return times decay exponentially, i.e.\ if $\nu(R>j)\le\const \vartheta^j$ for some $\vartheta\in(0,1)$, then
there is a $\tilde\vartheta\in(0,1)$ so that
$p(k)\le \const\tilde\vartheta^k$.

 Recall that for each $n\in\mathbb{N}$ the elements of the $n$th join 
 ${\cal A}^n=\bigvee_{i=0}^{n-1}T^{-i}{\cal A}$ 
of the partition ${\cal A}=\{\Omega_{i,j}\}$ are called $n$-cylinders. 
For each $n\in\mathbb N$ the $n$-cylinders ${\cal A}^n $ form a new partition of the space, a refinement of the original partition. The
$\sigma$-algebra  $\f$ generated by all $n$-cylinders ${\cal A}^\ell$, for all $\ell\ge1$, is 
the $\sigma$-algebra of the system $(\Omega,\f,\mu)$. 

  We will need the following standard arithmetic lemma to carry estimates for cylinders
over to union of cylinders.

\begin{lemma} \label{ineq2}
Let $a_1,a_2,\dots$ and $b_1,b_2,\dots$ be positive reals. Then
$$
\left|1-\frac{a_1+a_2+\dots}{b_1+b_2+\dots}\right|\le\sup_i\left|1-\frac{a_i}{b_i}\right|.
$$
\end{lemma}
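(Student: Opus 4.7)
The plan is to set $M=\sup_i\left|1-\frac{a_i}{b_i}\right|$ (assuming the supremum is finite, otherwise the inequality is trivial) and unpack what this uniform bound on each ratio says about $a_i$ in terms of $b_i$. Concretely, for every index $i$ we have $|b_i-a_i|\le M\,b_i$, or equivalently $-M\,b_i\le b_i-a_i\le M\,b_i$.

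Next I would sum these inequalities termwise. Since all $b_i>0$ and the bound $b_i-a_i\le M b_i$ holds with the same constant $M$ for every $i$, summation preserves both directions of the inequality and yields
\[
-M\sum_i b_i\;\le\;\sum_i(b_i-a_i)\;\le\;M\sum_i b_i.
\]
Dividing through by $\sum_i b_i>0$ gives $\left|1-\frac{\sum_i a_i}{\sum_i b_i}\right|\le M$, which is exactly the claimed inequality.

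There is essentially no obstacle: the only mild subtlety is convergence of the series. If $\sum_i b_i=\infty$ one can pass to partial sums $\sum_{i=1}^N$, obtain the bound for each $N$ from the same termwise argument, and take $N\to\infty$; if $\sup_i|1-a_i/b_i|=\infty$ the claim is vacuous. No deeper fact about the $a_i$ or $b_i$ is needed — the lemma is a purely arithmetic convex-combination statement, expressing that $\sum a_i/\sum b_i$ lies in the convex hull of the ratios $\{a_i/b_i\}$ when weighted by $b_i/\sum_j b_j$, and hence its distance to $1$ cannot exceed the worst individual distance.
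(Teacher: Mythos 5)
Your proof is correct and is essentially the same as the paper's: both start from $\sup_i|1-a_i/b_i|=M$, rewrite it as the two-sided termwise bound (you use $-Mb_i\le b_i-a_i\le Mb_i$, the paper uses $(1-\epsilon)b_i\le a_i\le(1+\epsilon)b_i$, which is the same inequality), sum over $i$, and divide by $\sum_i b_i$. Your added remarks on passing to partial sums when the series diverge and on the convex-combination interpretation are reasonable clarifications but do not change the argument.
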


\begin{proof} If we put $\epsilon=\sup_i\left|1-\frac{a_i}{b_i}\right|$ then we have by assumption
 $(1-\epsilon)b_i\le a_i\le(1+\epsilon)b_i$.
Summation over $i$ yields
$$
(1-\epsilon)\sum_ib_i\le\sum_ia_i\le(1+\epsilon)\sum_ib_i
$$
and therefore
$$
(1-\epsilon)\le\frac{\sum_ia_i}{\sum_ib_i}\le(1+\epsilon) 
$$
which implies the statement.
\end{proof}

\begin{lemma}\label{bound1}
There exists a constant $C_6$ so that $\|{\cal{L}}^n\chi_A\|_\gamma\le C_6$ for all 
$A\in\sigma({\cal A}^n)$ and $n$.
\end{lemma}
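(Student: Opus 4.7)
The plan is to bound the sup-norm and the H\"older seminorm of $\mathcal{L}^n\chi_A$ separately, exploiting the crucial fact that $\chi_A$ is $\sigma(\mathcal{A}^n)$-measurable: its discontinuities occur only along $n$-cylinder boundaries, and these are precisely the loci that $\mathcal{L}^n$ averages over via the Markov pairing of preimages. For the supremum estimate, note the trivial monotonicity $\mathcal{L}^n\chi_A\le\mathcal{L}^n 1$; the standard Lasota--Yorke bound on the tower, applied to the constant function $1\in\mathcal{C}_\gamma$ (which has $\|1\|_\gamma=1$ and $\|1\|_{\mathscr{L}^1}=\nu(\Omega)<\infty$), yields $\|\mathcal{L}^n 1\|_\gamma\le K_1$ uniformly in $n$, and hence $|\mathcal{L}^n\chi_A|_\infty\le K_1$.

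For the H\"older seminorm, fix $x,y\in\Omega$ with $s=s(x,y)>0$. By the Markov structure of the partition $\mathcal{A}^n$, for each $n$-cylinder $C$ on which the inverse branch of $T^n$ is defined at $x$, a unique preimage $z_C(x)\in C$ exists, and the same branch is also defined at $y$ and produces a partner $z_C(y)\in C$: this is because $T^n(C)$ is a union of elements of $\mathcal{A}$, and $x,y$ lie in the same such element whenever $s(x,y)>0$. Since $A$ is a union of entire $n$-cylinders, $\chi_A(z_C(x))=\chi_A(z_C(y))$ depends only on whether $C\subset A$, and the difference telescopes to
\begin{equation*}
\mathcal{L}^n\chi_A(x)-\mathcal{L}^n\chi_A(y)=\sum_{C\in\mathcal{A}^n:\,C\subset A}\left(\frac{1}{JT^n(z_C(x))}-\frac{1}{JT^n(z_C(y))}\right).
\end{equation*}
The distortion hypothesis on $JT^R$, combined with the fact that $JT\equiv 1$ off the top levels of the tower, gives by a standard chaining argument
\begin{equation*}
\left|1-\frac{JT^n(z_C(x))}{JT^n(z_C(y))}\right|\le \mbox{const}\cdot\gamma^{s(x,y)}
\end{equation*}
uniformly over $C$ and $n$. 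Inserting this bound into the displayed sum and comparing the resulting majorant to $\mathcal{L}^n 1(y)\le K_1$, one obtains $|\mathcal{L}^n\chi_A(x)-\mathcal{L}^n\chi_A(y)|\le K_2\,\gamma^{s(x,y)}$ for a constant $K_2$ independent of $n$ and $A$. Taking $C_6=K_1+K_2$ finishes the proof.

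The principal technical obstacle is justifying the chained distortion bound for the general iterate $T^n$: the hypothesis controls only $JT^R$ for pairs of points within a single base element $\Omega_{0,i}$, whereas $n$ need not correspond to a complete return. The remedy is to decompose the orbit of length $n$ into blocks bracketed by successive applications of $F^R$, apply the hypothesis on each block, and observe that the separation $s(T^jz_C(x),T^jz_C(y))$ grows along successive returns so that $\gamma^{s(\cdot,\cdot)}$ decreases geometrically; the telescoping product of block distortions is then summable to a uniform constant times $\gamma^{s(x,y)}$. A subsidiary care-point is the simultaneous existence of paired preimages at cylinder boundaries, which is handled by the Markov property as indicated above.
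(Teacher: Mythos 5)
Your proof is correct in substance and uses the same core ingredients as the paper's --- the Markov pairing of preimage branches of $T^n$, a chained distortion estimate for $JT^n$ obtained by breaking the orbit into $T^R$-blocks, and a comparison with $\mathcal{L}^n 1$ --- but the argument is organized differently. The paper first proves the normalized estimate
$\left|1-\mathcal{L}^n h\chi_A(y)/\mathcal{L}^n h\chi_A(x)\right|\le c\gamma^{s(x,y)}$ for a \emph{single} $n$-cylinder $A\in\mathcal{A}^n$ (where $\mathcal{L}^n h\chi_A(x)$ is a single term), extends it to unions $A\in\sigma(\mathcal{A}^n)$ via the arithmetic Lemma~\ref{ineq2}, and finally converts the ratio bound to an absolute one using $|\mathcal{L}^n 1|_\infty\le 1$. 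You instead telescope the difference $\mathcal{L}^n\chi_A(x)-\mathcal{L}^n\chi_A(y)$ directly over the constituent cylinders and majorize by $\mathcal{L}^n 1(y)$, thereby bypassing Lemma~\ref{ineq2} entirely; this is a clean and legitimate shortcut. One thing each approach buys: the paper's ratio estimate for a single cylinder is stated in a form reused elsewhere (and it is actually $\mathcal{L}^n(h\chi_A)$, with the invariant density $h$ inserted, that is later needed when $\lambda=\mathcal{L}^n h\chi_A$ is fed into the decay-of-correlations machinery --- your argument applies verbatim to that case since $h$ is bounded above, below, and $\gamma$-H\"older); your direct telescoping gives the seminorm bound with less bookkeeping.

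Two minor slips, neither fatal. First, the sup-norm bound: the paper simply uses the elementary fact $|\mathcal{L}^n 1|_\infty\le 1$ on the tower (cf.\ [Y3] Sublemma~1); invoking a ``Lasota--Yorke inequality'' here is both unnecessary and somewhat misattributed, since on the $\gamma^{s}$-H\"older space the operator is not treated by Lasota--Yorke/quasi-compactness in Young's setup but by coupling. Second, you say the separation $s(T^j z_C(x),T^j z_C(y))$ \emph{grows} along successive returns; in fact it \emph{decreases} (each completed $T^R$-block uses up one unit of shared itinerary), which is precisely why the block contributions $\gamma^{s(T^{n_k}x,T^{n_k}y)}$ form a geometric series dominated by the last term $\asymp\gamma^{s(x,y)}$. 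The conclusion you draw --- geometric summability to $\mathrm{const}\cdot\gamma^{s(x,y)}$ --- is correct nonetheless.
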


\begin{proof}
We first show that 
$$
\left|\log \frac{JT^n(x)}{JT^n(y)}\right|\le c_1\gamma^{s(T^{n}x,T^{n}y)},
$$
for all pairs $x,y\in A$,  $A\in {\cal A}^n$ and $\forall n\in \mathbb{N}$.
For $A\in{\cal A}^n$ and $x,y\in A$ we have $x,y\in\Omega_{i,j}$ for some $i<R_j=R(\Omega_{0,j})$. 
Put $n_0=R_j-i$ and then successively  $n_\ell=R_j-i+\sum_{k=1}^{\ell-1} R_{j_k}$,
where the $j_\ell$ are such that $T^{n_\ell}x\in\Omega_{0,j_\ell}$. Clearly 
$T^{n_\ell}x,T^{n_\ell}y\in \Omega_{k,j_\ell}$ for 
$k<R_{j_\ell}$ for all $\ell$ for which $n_\ell\le n$.
Put $L=\max_{n_\ell\le n}\ell$ and we get from the distortion property
\begin{eqnarray*}
\left|\log\frac{JT^n(x)}{JT^n(y)}\right|&\le&\sum_{k=0}^{L-1}
\left|\log\frac{JT^{R_{j_k}}(T^{n_k}(x))}{JT^{R_{j_k}}(T^{n_k}(y))}\right|\\
 &\le&c_2\sum_{k=0}^{L-1} \gamma ^{s(T^{n_k}(x),T^{n_k}(y))}\\
&\le&c_3 \gamma ^{s(T^{n_L}(x),T^{n_L}(y))}\\
 &\le&c_1 \gamma ^{s(T^{n}(x),T^{n}(y))}                       
\end{eqnarray*}
for some $c_1$. 

Now, if $x,y\in\Omega_{i,j}$ for some $i,j$, then let
$A\in{\cal A}^n$ and  $x',y'\in A$ be so that $T^nx'=x$ and $T^ny'=y$
(for $x',y'$ to exist one needs $A\subset\Omega_{i,j}$). Then we obtain
$$
\frac{\op^nh\chi_A(y)}{\op^nh\chi_A(x)}=\frac{h(y')}{h(x')}\frac{JT^n(x')}{JT^n(y')}
$$
which implies by the above estimate and the regularity of the density $h$ that
$$
\left|\log\frac{\op^nh\chi_A(y)}{\op^nh\chi_A(x)}\right|\le\left|\log\frac{JT^n(x')}{JT^n(y')}\right|
+\left|\log\frac{h(y')}{h(x')}\right| \le c_1 \gamma ^{s(x,y)} + c_4 \gamma ^{s(x',y')}\le c_5 \gamma ^{s(x,y)},
$$
for which we can also write 
$$
\left|1-\frac{\op^nh\chi_A(y)}{\op^nh\chi_A(x)}\right|\le c_6\gamma^{s(x,y)} \hspace{5mm}\forall A\in{\cal A}^n.
$$
Now any $A\in\sigma({\cal A}^n)$ is the disjoint union of some $A_j\in{\cal A}^n$.
We now apply Lemma~\ref{ineq2} with the identification $a_j=\op^nh\chi_{A_j}(x), b_j=\op^nh\chi_{A_j}(y)$.
Since $\op^nh\chi_A=\sum_j\op^nh\chi_{A_j}$ we obtain 
$$
\left|1-\frac{\op^nh\chi_A(y)}{\op^nh\chi_A(x)}\right|\le c_6\gamma^{s(x,y)}
 \hspace{5mm}\forall A\in\sigma({\cal A}^n), \forall x,y\mbox{ in some } \Omega_{i,\ell},\forall n.
$$

Let us note that in particular (cf.~\cite{Y3}~Theorem~1(ii) and Sublemma~1) that 
(as $\sum_{A\in{\cal A}^n}\chi_A=1$)
$$
\left|1-\frac{\op^n1(y)}{\op^n1(x)}\right|\le c_1\gamma^{s(x,y)}.
$$

Since $|{\cal{L}}^n1|_\infty \le1$, we now obtain 
$$
\left|\op^nh\chi_A(x)-\op^nh\chi_A(y)\right|
\le|\op^nh\chi_A(y)|\cdot\left|1-\frac{\op^nh\chi_A(x)}{\op^nh\chi_A(y)}\right|
\le C_6\gamma^{s(x,y)}
$$
for some constant $C_6$. Hence $\op^nh\chi_A\in{\cal C}_\gamma$ and, moreover, 
is bounded in the ${\cal C}_\gamma$-norm uniformly in $A\in\sigma({\cal A}^n)$ and $ n\in\mathbb{N}$.
\end{proof}


\noindent We proceed as in the proof of \cite{Y3} Theorem~3 and put $\lambda=\op^nh\chi_A$ which is a strictly
positive function. Then $\eta=\frac\lambda{\mu(A)}$ is a density function as 
$\nu(\lambda)=\nu(\op^nh\chi_A)=\nu(h\chi_A)=\mu(A)$. Moreover $\|\lambda\|_\gamma$ is by 
Lemma~\ref{bound1} bounded by $C_6$ uniformly in $n$ and $A\in\sigma({\cal A}^n)$. 
Denote by $p(k), k=1,2,\dots$, the rate of the decay of 
correlations which is $p(k)={\cal O}(k^{-\beta})$ if the return times tail decays like $k^{-\beta}$ and
$p(k)={\cal O}(\tilde\vartheta^k)$ for some $\tilde\vartheta\in(0,1)$ if the return times tail
decays exponentially. We obtain
\begin{eqnarray}
\mu(A\cap T^{-k-n}B)-\mu(A)\mu(B)&=&
\nu(h \chi_A(\chi_B\circ T^{k+n}))-\nu(h\chi_A)\nu(h\chi_B)\nonumber\\
&=&\mu(A)\nu( \chi_B\op^k\eta)-\nu(h\chi_B)\nonumber\\
&=&\mu(A)\int\chi_B(\op^k\eta-h)\,d\nu\nonumber\\
&=&\int_B(\op^k\lambda-\mu(A)h)\,d\nu.\label{young.mixing1}
\end{eqnarray}
In particular we thus obtain the estimates using the 
$\mathscr{L}^1$-convergence of $\op^k\eta-h$ from~(\ref{convergence}) yields
\begin{eqnarray}
\left|\mu(A\cap T^{-k-n}B)-\mu(A)\mu(B)\right|
&\le&\mu(A)\int\chi_B|\op^k\lambda-h|\,d\nu\nonumber\\
&\le&\left\{\begin{array}{l}\mu(B)\\
\mu(A)c_1\|\eta\|_\gamma p(k)\end{array}\right.\nonumber\\
&\le&\left\{\begin{array}{l}\mu(B)\\
c_2p(k)\end{array}\right.\label{young.mixing2}
\end{eqnarray}
as $\|\eta\|_\gamma=\frac1{\mu(A)}\|\lambda\|_\gamma\le\frac{C_6}{\mu(A)}$. The upper estimate which 
only uses boundedness of $h$ and the pullbacks of the density $\eta$ is useful for 
small $k$ and $\mu(B)$. In particular this shows that the invariant measure on a Young tower
is $\alpha$-mixing but not $\phi$-mixing.

Denote by $\hat{T}=T^R$ the induced map on $\Omega_0$ given by 
$\hat{T}(x)=T^{R(x)}$ for $x\in\Omega_0$ and extended to the entire
tower by  putting $\hat{T}(x)=T^{R(\Omega_{0,i})-j}x$ for $x\in\Omega_{j,i}$.
Similarly we extend $R$ to the entire space $\Omega$ by putting
$R(x)=R(\Omega_{j,i})-j$ for $x\in\Omega_{j,i}$.
To deal with short returns let $A\subset\Omega$ be a set with period $r_A$ and put 
$\mathscr{S}(A)=\bigcup_j A_j$ for  the smallest disjoint union so that $A\subset\mathscr{S}(A)$, 
where $A_j\in\sigma(\mathcal{A}^{\ell_j})$,
and $\ell_j=\sum_{k=0}^{K_j-1}R(\hat{T}^k\tilde{A}_j)$ (for $K_j\ge1$) is such that
 $\ell_j\le\min(n,r_A)$.


\begin{theorem}\label{young.poisson.exponential}
As described above let $T$ be a map on the Markov Tower structure  $\Omega$
with a reference measure $\nu$ and return time function $R$. Let  $\mu$ be the absolutely continuous 
invariant measure. Then for a sequence $A_n\in\sigma({\cal A}^n)$  the following result
 holds true ($\tau_{A_n}^k$ is the $k^{th}$ entry time to $A_n$):

\noindent {\bf (I)} If $\mu(A_n)\ge e^{-Kn}$, $\mu(\mathscr{S}(A_n))\le e^{-Ln}$ for 
some $0<L\le K$ then 
$$
 \left|\Pro\left(\tau_{A_n}^k>\frac{t}{\mu(A_n)}\right)-e^{-t}\sum_{i=0}^{k-1}\frac{t^i}{i!}\right|
\le C_7 (t\vee1)e^{-G n}\quad\forall t>0 \text{ and } \forall n\in\mathbb N,
$$
for all $G<L$ if $p(k)$ is exponential and $G=\frac1{\beta+1}(\beta L-K)$
if $p(k)\sim k^{-\beta}$ is polynomial with $\beta>K/L$.

\noindent {\bf (II)} If $\mu(A_n)\ge n^{-\kappa}$, $\mu(\mathscr{S}(A_n))\le n^{-\lambda}$ for 
some $1<\lambda\le \kappa$ then 
$$
 \left|\Pro\left(\tau_{A_n}^k>\frac{t}{\mu(A_n)}\right)-e^{-t}\sum_{i=0}^{k-1}\frac{t^i}{i!}\right|
\le C_7 (t\vee1)n^{-\gamma}\quad\forall t>0 \text{ and } \forall n\in\mathbb N,
$$
where $\gamma=\lambda-1$ if $p(k)$ is exponential and
 $\gamma=\frac{\beta\lambda -\kappa}{\beta+1}$
if $p(k)\sim k^{-\beta}$ is polynomial of order $\beta>\kappa/\lambda$.
\end{theorem}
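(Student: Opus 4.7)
I will adapt the Chen-Stein framework from the proof of Theorem~\ref{maintheorem} to the Young tower setting, replacing the $\phi$-mixing estimates by the $\mathscr{L}^1$-correlation bound~(\ref{young.mixing2}) coming from the transfer operator. For $A=A_n$ and $m=[t/\mu(A)]$, the Stein identity~(\ref{errornewrepresentation}) combined with the bounds on the Stein solution from Lemma~\ref{logsum} reduce the total variation distance to a multiple of $t(t\vee 1)|\log\mu(A)|$ times
$$
\frac{\xi_a}{\mu(A)}, \qquad
\xi_a=\max_i\bigl|\Pro(\{W_m^i=a\}\cap T^{-i}A)-\Pro(W_m^i=a)\mu(A)\bigr|.
$$
Introducing a gap of half-width $\Delta>n$ around $i$ as in Proposition~\ref{mixingtheorem}, I would decompose $\xi_a\le R_1+R_2+R_3$.

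The counting term is immediate: $R_3\le 2\Delta\mu(A)^2$. For the principal mixing term $R_2$, the set $T^{-i}A$ is a union of $n$-cylinders at position $i$ while $\{W_m^{i,+}=a^+\}$ is measurable with respect to cylinders starting at position $i+\Delta+1$. Applying~(\ref{young.mixing2}) with effective gap $k=\Delta-n$ to the normalised density $\eta=\op^n h\chi_A/\mu(A)$, whose $\gamma$-norm is bounded by $C_6/\mu(A)$ via Lemma~\ref{bound1}, then summing over the partition vectors $\vec a$, yields $R_2\le c\,p(\Delta-n)\mu(A)$.

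The hardest part is the short-returns term $R_1\le 2\mu(A)\mathbb{P}_A(\tau_A\le\Delta)$, for which the Young-tower analogue of Lemma~\ref{recurrenceestimates} must be derived. I split $[r_A,\Delta]$ into $[r_A,n]$ and $(n,\Delta]$. The upper range is handled by the refined form of~(\ref{young.mixing2}), namely $|\mu(A\cap T^{-j}A)-\mu(A)^2|\le c\min\{\mu(A),p(j-n)\}$, which sums to a term of order $\mu(A)(|\log\mu(A)|+\text{tail of }p)$ rather than a divergent $\sum p$. The short range $r_A\le j\le n$ is where $\mathscr{S}(A_n)$ enters: the length constraint $\ell_j\le\min(n,r_A)$ in the definition of $\mathscr{S}(A_n)$ ensures that its constituents are well-adapted to the return-time structure of the induced map $\hat T$, so a careful combination of the invariance of $\mu$, the bounded-density property of $h$ and the Markov structure of $\hat T$ should produce a product bound of the form $\mu(A\cap\bigcup_{j=r_A}^n T^{-j}A)\le C\mu(A)\,\mu(\mathscr{S}(A_n))$. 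Assembling, $R_1\le C\mu(A)\bigl(\mu(\mathscr{S}(A_n))+\mu(A)|\log\mu(A)|\bigr)+O(\Delta\mu(A)^3)$.

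Collecting the three estimates and dividing by $\mu(A)$ gives $\xi_a/\mu(A)\le C\mu(\mathscr{S}(A_n))+Cp(\Delta-n)+C\Delta\mu(A)+C\mu(A)|\log\mu(A)|$. In case~(I) with exponential $p$, choose $\Delta-n$ of order $|\log\mu(A)|/|\log\tilde\vartheta|\sim Kn$ so that $p(\Delta-n)$ is exponentially small in $n$; the leading contribution $\mu(\mathscr{S}(A_n))\le e^{-Ln}$, multiplied by the $|\log\mu(A)|\le Kn$ coming from the Stein bound, yields the claimed $(t\vee 1)e^{-Gn}$ for any $G<L$. The polynomial-$p$ subcase is optimised via $\Delta=\mu(A)^{-\omega}$ exactly as in the proof of Theorem~\ref{mainresult2}, producing $G=(\beta L-K)/(\beta+1)$ whenever $\beta>K/L$. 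Case~(II) is handled identically, with $|\log\mu(A)|=O(\log n)$ absorbed into the polynomial rate to give $\gamma=\lambda-1$ or $\gamma=(\beta\lambda-\kappa)/(\beta+1)$. The primary technical obstacle is the short-returns bound in the third paragraph: obtaining the multiplicative factor $\mu(A)$ in front of $\mu(\mathscr{S}(A_n))$ is what makes the rate survive division by $\mu(A)$ in the Stein sum, and it is the only step that genuinely uses the finer tower geometry beyond what correlation decay alone provides.
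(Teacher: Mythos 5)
Your overall framework (Chen--Stein with a gap of half-width $\Delta$, splitting $\xi_a$ into $R_1,R_2,R_3$, and then optimising $\Delta$) matches the paper's approach to Theorem~\ref{young.poisson}, and your final optimisation of $\Delta$ is essentially the paper's. But two of the three core estimates contain genuine gaps.

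\textbf{The $R_2$ estimate glosses over the central new difficulty.} On a Young tower the correlation bound~(\ref{young.mixing2}) is only $\alpha$-mixing: $|\mu(A\cap T^{-k-n}B)-\mu(A)\mu(B)|\le c_2p(k)$ uniformly in $B$, \emph{not} scaled by $\mu(B)$. Consequently $R_{2,1}(a^-,a^+)$ is bounded by $\mathcal{O}(p(\Delta-n))$ for each fixed pair $(a^-,a^+)$, and a naive sum over the $a+1$ choices of $a^-$ with $a^-+a^+=a$ gives a bound growing linearly in $a$ --- not the uniform bound $R_2(a)\le cp(\Delta-n)\mu(A)$ you claim. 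The paper's proof of Theorem~\ref{young.poisson} does not bound $R_2(a)$ pointwise at all: it bounds the \emph{weighted} sum $\sum_a f(a+1)R_{2,1}$ by splitting the indices $(a^-,a^+)$ dyadically and, for each sign pattern $\varepsilon_{a^-,a^+}$, recombining $\sum_{a^-}\lambda_{a^-}=\op^{i+n}h\chi_{\tilde X}$ so that Lemma~\ref{bound1} applies to the union. That is the key technical step that survives the transition from $\phi$-mixing to $\alpha$-mixing, and it produces the $p(\Delta-n)\log m/\mu(A)$ term in the paper's Theorem~\ref{young.poisson}, not the $p(\Delta-n)$ per $a$ that your assembly assumes.

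\textbf{The $R_1$ estimate is not correct as stated.} For the upper range $j\in(n,\Delta]$, the bound $\mu(A\cap T^{-j}A)\le\mu(A)^2+c\min\{\mu(A),p(j-n)\}$ gives $\Pro_A(\tau_A\in(n,\Delta])\lesssim\Delta\mu(A)+\mu(A)^{-1}\sum_{j'}\min\{\mu(A),p(j')\}$. For polynomial $p$ the last sum is of order $\mu(A)^{1-1/\beta}$, so dividing by $\mu(A)$ gives $\mu(A)^{-1/\beta}$, which diverges; for exponential $p$ you get $|\log\mu(A)|$, which is not even $\le 1$. So your claimed ``$\mu(A)(|\log\mu(A)|+\text{tail of }p)$'' does not follow. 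For the short range $j\le n$, the paper does \emph{not} prove $\mu(A\cap\bigcup_{j=r_A}^n T^{-j}A)\le C\mu(A)\mu(\mathscr S(A_n))$; it proves the weaker per-$\ell$ bound $\mu(A\cap T^{-\ell}A)\le c_5\mu(\mathscr S(A))\mu(A)$ (via $|\lambda_{A_j}|_\infty\lesssim\mu(A_j)$ and the Markov structure), which after summation yields $\Pro_A(\tau_A\le\Delta)\le c_5\Delta\mu(\mathscr S(A))$ with the factor $\Delta$. Your claim drops that factor and gives no argument for doing so. Since the paper's optimal $\Delta$ is large (e.g.\ $\Delta\sim(\mu(\mathscr S(A_n))\mu(A_n))^{-1/(1+\beta)}$), the factor of $\Delta$ is essential for the stated rates $G=\tfrac{1}{\beta+1}(\beta L-K)$ and $\gamma=\tfrac{\beta\lambda-\kappa}{\beta+1}$, and it is not a removable technicality.
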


\noindent  Note that in both cases, exponentially and polynomially decreasing sets $A_n$
and $\mathscr{S}(A_n)$, the lowest possible bound for the value $\beta$ is $1$ for polynomially
decaying return times tail $\nu(R>n)\sim n^{-\beta}$. In these cases one must
have $K=L$ (exponential case) or $\kappa=\lambda$ (polynomial case).

\subsection{Return times distribution}

Here again we denote by $p(k), k=1,2,\dots$, the rate of the decay of 
correlations as in~(\ref{young.mixing2}), that is $p(k)={\cal O}(k^{-\beta})$ 
if the return times tail decays like $k^{-\beta}$ and
$p(k)={\cal O}(\tilde\vartheta^k)$ for some $\tilde\vartheta\in(0,1)$ if the return times tail
decays exponentially. Let us now prove the main result for Markov towers.

\begin{theorem}\label{young.poisson}
Let  $T\colon\Omega\to\Omega$ be a Markov tower as above 
with a `reference measure' $m$ and a return time function $R$. Let  $\mu$ be the absolutely continuous 
invariant measure for $T$ and $p(k), k=1,2,\dots$, the rate of the decay of correlations.

Let  $A\in\sigma({\cal A}^n)$. Then for all $\Delta$ ($n<\Delta<\!\!<m$) and $m\ge t$:
$$
\left|\Pro(W_m\in E)-\mu_0(E)\right|\le \const\left(\Delta\mu(\mathscr{S}(A))+(2+t)\frac{p(\Delta-n)}{\mu(A)}\log m\right)
$$
\end{theorem}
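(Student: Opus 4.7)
The plan is to extend the Chen-Stein argument of Section~2.3 and Proposition~\ref{mixingtheorem} to the Young tower setting. Two substitutions drive the adaptation: the $\mathscr{L}^1$ decay of correlations estimate (\ref{young.mixing2}) takes the role of the pointwise $\phi$-mixing bound, and the cylinder approximation $\mathscr{S}(A)$ takes the role of the short-return quantities $\delta_A(j)$ (which relied on $\phi$-mixing in Lemma~\ref{recurrenceestimates}). As in Section~2.3, I would first reduce the problem to estimating the error term $\epsilon_{a,i}$ from (\ref{errorterm1}) via the representation (\ref{errornewrepresentation}), so that
$$
|\Pro(W_m\in E)-\mu_0(E)|\le\sum_{i=1}^m p_i\sum_{a=0}^m|f(a+1)|\,\epsilon_{a,i},
$$
with $\sum_i p_i=t$ and $\sum_a|f(a+1)|=O((2+t)\log m)$ by Lemma~\ref{logsum}.

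Following Proposition~\ref{mixingtheorem}, I would fix $\Delta$ with $n<\Delta\ll m$, introduce the gap decomposition $W_m^i=\tilde W_m^i+U_m^i$, and split the dependence error $\xi_a$ into three pieces $R_1,R_2,R_3$. For the principal mixing term $R_2$, (\ref{young.mixing2}) applies with $T^{-i}A$ (a shifted union of $n$-cylinders) on the left and the event $\{W_m^{i,+}=a^+\}$ (measurable with respect to iterates from $i+\Delta+1$) on the right, leaving an effective mixing gap of $\Delta-n+1$:
$$
\bigl|\mu(\{W_m^{i,+}=a^+\}\cap T^{-i}A)-\mu(W_m^{i,+}=a^+)\mu(A)\bigr|\le c\,p(\Delta-n).
$$
Summing over the partitions $(a^-,a^+)$ of $a$ yields $R_2\le Cp(\Delta-n)$; after division by $\mu(A)$ and folding in the Stein log-factor this produces the second summand $(2+t)p(\Delta-n)\mu(A)^{-1}\log m$ of the claimed bound.

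For the short-return terms $R_1$ and $R_3$, Lemma~\ref{recurrenceestimates} is no longer available, so I would use the inclusion $A\subset\mathscr{S}(A)$ together with invariance of $\mu$:
$$
\mu(\{U_m^{i,+}>0\}\cap T^{-i}A)\le\sum_{k=1}^\Delta\mu(A\cap T^{-k}A)\le\Delta\,\mu(\mathscr{S}(A)),
$$
using that $\mathscr{S}(A)$ is a disjoint union of cylinders of length at most $\min(n,r_A)$, and the symmetry between $U_m^{i,-}$ and $U_m^{i,+}$ established in the proof of Proposition~\ref{mixingtheorem} carries over verbatim. To bring this into the Chen-Stein assembly with the clean form $\Delta\mu(\mathscr{S}(A))$ stated in the theorem, I would isolate short returns as an a priori exceptional event rather than push them through the Stein weights: on the complement of $\bigcup_i\{T^{-i}A\cap\{U_m^i>0\}\}$ the contributions $R_1$ and $R_3$ vanish, and the excess is charged to the first summand directly.

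The main obstacle is precisely this short-return step: extracting $\Delta\mu(\mathscr{S}(A))$ in its clean form, without the extraneous $(2+t)\log m$ factor that a naive insertion into the $\epsilon_{a,i}$ framework would produce. This relies crucially on the structural choice of $\mathscr{S}(A)$ as a union of cylinders of length at most $r_A$, so that the approximation respects the period of $A$ and does not introduce additional short-return collisions. Once the short-return bookkeeping is done, the remaining argument mirrors the $\phi$-mixing proof of Theorem~\ref{maintheorem} with (\ref{young.mixing2}) replacing the $\phi$-mixing bound throughout.
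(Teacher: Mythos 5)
There is a genuine gap in your treatment of the principal term $R_2$, and it is exactly the place where the Markov-tower argument cannot simply mirror the $\phi$-mixing proof. In Proposition~\ref{mixingtheorem} the per-term bound reads $|R_{2,1}(a^-,a^+)|\le\phi(\Delta)\Pro(W_m^{i,-}=a^-)$, and the factor $\Pro(W_m^{i,-}=a^-)$ makes the inner sum over $a^-$ collapse to $\le1$. The $\mathscr{L}^1$ decay estimate~(\ref{young.mixing2}) yields no such factor: with $\lambda_{a^-}=\op^{i+n}h\chi_{\{W_m^{i,-}=a^-\}}$, the bound is $|R_{2,1}(a^-,a^+)|\le\|\lambda_{a^-}\|_\gamma\,p(\Delta-n)\le C_6\,p(\Delta-n)$, which is \emph{uniform} in $a^-$ rather than proportional to $\mu(X_{a^-})$. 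Your assertion ``Summing over the partitions $(a^-,a^+)$ of $a$ yields $R_2\le Cp(\Delta-n)$'' therefore fails: for fixed $a$ there are $a+1$ pairs $(a^-,a^+)$, so the naive sum gives $\mathcal{O}((a+1)\,p(\Delta-n))$, and after weighting by $|f(a+1)|$ and summing over $a$ the result is of order $m\,p(\Delta-n)\sim p(\Delta-n)/\mu(A)$ multiplied by a further factor of $m$ rather than $\log m$. The paper's actual proof is necessarily more delicate: it groups the terms by the sign $\varepsilon_{a^-,a^+}$ of the integrals, uses that $\sum_{a^-}\lambda_{a^-}=\op^{i+n}h\chi_{\cup_{a^-}X_{a^-}}$ is still bounded in $\|\cdot\|_\gamma$ because the $X_{a^-}$ are disjoint unions of cylinders (Lemma~\ref{bound1}, which is exactly why that lemma is stated for sets in $\sigma(\mathcal{A}^n)$ and not just single cylinders), and then performs a dyadic decomposition over $a^-+a^+$ to extract the $\log m$ factor. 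This rearrangement, not the gap decomposition itself, is the substantive new idea of the Markov-tower case, and it is absent from your proposal.

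Your worry about the short-return term, and the proposed fix of an a~priori exceptional set $\bigcup_i(T^{-i}A\cap\{U_m^i>0\})$, is both unnecessary and not obviously correct. The paper's own estimate does \emph{not} produce a ``clean'' $\Delta\mu(\mathscr{S}(A))$: the final display in the proof carries a factor $(t+(2+t)|\log\mu(A)|)\sim(2+t)\log m$ in front of $\Delta\mu(\mathscr{S}(A))$, so the theorem's statement is already a benign simplification and no further cleverness is required. Moreover, the quantities $R_1(a)$ and $R_3(a)$ are differences of probabilities, not pointwise functions, so conditioning away the event $\{U_m^i>0\}$ does not make them ``vanish''; you would instead be comparing the laws of $W_m$ under two different measures and would have to control a new error of size $\mu(\bigcup_i T^{-i}A\cap\{U_m^i>0\})$, which lands you back in the same estimate without any gain. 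Your short-return bound $\mu(A\cap T^{-k}A)\le\mu(\mathscr{S}(A))\mu(A)$ via $\mathscr{S}(A)$ and the Jacobian distortion is, however, correct and is the argument the paper uses.
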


\begin{proof}
As before we put  $W_m=\sum_{j=1}^m\chi_A\circ T^j$ and 
$W_m^i=\sum_{\substack{1\le j\le m \\ j\neq i}}\chi_A\circ T^j$.
We have to estimate the following quantity:
$$
\left|\Pro(W_m\in E)-\mu_0(E)\right|= \sum_{i=1}^{m}p_i\sum_{a=0}^{m}f(a+1)\epsilon_{a,i},
$$ 
where  
$$
\epsilon_{a,i}=\left|\Pro(W_m=a)-\Pro(W_m=a+1|I_i=1)\right|
\le\left|\Pro(W_m=a)-\Pro(W_m^i=a)\right| +\frac{\xi_a}{\mu(A)}   \label{xi_alpha}, 
$$          
and 
$$
\xi_a=\max_i\left|\Pro(\{W_m^i=a\}\cap T^{-i}A)-\Pro(W_m^i=a) \mu(A)\right|. 
$$
Clearly
$$
\left|\Pro(W_m=a)-\Pro(W_m^i=a)\right|\le \Pro(I_i=1)=\mu(A)
$$
which leaves us to estimate  $\xi_a$ and to execute the sum over $a$ where we will
use the bounds from Lemma~\ref{bound1} for $f$.

Let $\Delta<\!\!<m$ be the halfwith of the `gap' and for $i\in(0,m]$ define as before
\begin{align}W_m^{i, -}&=\sum\limits_{j=1}^{i-(\Delta+1)}\chi_A\circ T^j, &
W_m^{i, +}&=\sum\limits_{j=i+\Delta+1}^{m}\chi_A\circ T^j, \nonumber \\
 U_m^{i, -}&=\sum\limits_{j=i-\Delta}^{i-1}\chi_A\circ T^j, &
U_m^{i,+}&=\sum\limits_{j=i+1}^{i+\Delta}\chi_A\circ T^j, \nonumber \\
 U_m^{i}&= U_m^{i, -} + U_m^{i,+}, &
 \tilde{W_m^{i}}&= W_m^i-U_m^{i} =W_m^{i, -} + W_m^{i, +} \nonumber
 \end{align}
(with the obvious modifications if $i<\Delta$ or $i>m-\Delta$).
For $a\in[0,m]$ we have
\begin{eqnarray*}
\Pro(\{W_m=a+1\}\cap T^{-i}A)&=&\Pro(\{W_m^i=a\}\cap T^{-i}A) \\
&=&\sum_{\substack{\vec{a}=(a^-,a^{0,-},a^{0,+},a^+)\\ \text{s.t } |\vec{a}|=a}}
\Pro\big(\{W_m^{i, \pm}=a^\pm\}\cap \{U_m^{i,\pm}=a^{0,\pm}\}\cap T^{-i}A \big)                                      
\end{eqnarray*}
where the terms inside the sum are measures of intersections of five sets. Then
$$
 \Pro\left(\{W_m^i=a\}\cap T^{-i}A\right)-\Pro\left(W_m^i=a\right)\mu(A)=R_1(a)+R_2(a)+R_3(a),
$$
where
\begin{eqnarray*}
R_1(a)&=&\Pro\left(\{W_m^i=a\}\cap T^{-i}A\right)-\Pro\left(\{\tilde{W_m^i}=a\}\cap T^{-i}A\right)\\
R_2(a)&=&\Pro\left(\{\tilde{W_m^i}=a\}\cap T^{-i}A\right)-\Pro\left(\tilde{W_m^i}=a\right)\Pro\left(I_i=1\right)\\
R_3(a)&=&\left(\Pro\left(\tilde{W_m^i}=a\right)-\Pro\left(W_m^i=a\right)\right)\mu(A)
\end{eqnarray*}
estimated separately as follows in increasing order of difficulty.

\vspace{3mm}

\noindent\textbf{Estimate of $R_3$: } We first show that short returns are rare.
The set inclusions
\begin{eqnarray*}
\{W_m^i=a\}&\subset &\{\tilde{W_m^i}=a\}\cup\{U_m^i>0\}\\
\{\tilde{W_m^i}=a\}& \subset&\{W_m^i=a\}\cup\{U_m^i>0\}
\end{eqnarray*}
let us estimate
$$
\bigg| \Pro\left(\tilde{W_m^i}=a\right)-\Pro\left(W_m^i=a\right) \bigg|
\le \Pro\left(U_m^i>0 \right)
\le 2\Pro\left( \bigcup_{k=1}^{\Delta}\{I_{i+k}=1\} \right)
\le 2\Delta \mu(A).
$$
Hence
$$
|R_3(a)|\le 2\Delta \mu(A)^2
$$
for every $a=0,\dots,m$.

\vspace{3mm}

\noindent\textbf{Estimate of $R_1$: } Here we show that short returns are rare when 
conditioned on $T^{-i}A$. Observe that
\begin{eqnarray*}
\{W_m^i=a\}\cap T^{-i}A
&\subset& \left(\{\tilde{W_m^i}=a\}\cap T^{-i}A\right)\cup\left(\{U_m^i>0\}\cap T^{-i}A\right)\\
\{\tilde{W_m^i}=a\}\cap T^{-i}A&\subset& \left(\{W_m^i=a\}\cap T^{-i}A\right)
\cup\left(\{U_m^i>0\}\cap T^{-i}A\right).
\end{eqnarray*}
Since $U_m^i>0$ implies that either $U_m^{i,+}>0$ or $U_m^{i,-}>0$ we get
$$
\big|\Pro\big(\{W_m^i=a\}\cap T^{-i}A\big)-\Pro\big(\{\tilde{W_m^i}=a\}\cap T^{-i}A\big)\big|
\le \Pro\big( \{U_m^i>0\}\cap T^{-i}A\big)\le b^-_i+b^+_i
$$
where
$$
b^-_i=\Pro\big( \{U_m^{i,-}>0\}\cap T^{-i}A\big)\quad\text{and}\quad
b^+_i=\Pro\big( \{U_m^{i,+}>0\}\cap T^{-i}A\big).
$$
It was shown in Proposition~\ref{mixingtheorem} that $b_i^+=b_i^-$. 

Now let $\mathscr{S}(A)$ be a disjoint union of cylinders ${A}_j\in\sigma({\cal A}^{\ell_j})$, where 
$\ell_j=\sum_{k=0}^{K_j-1}R(\hat{T}^kA_j)$ for some $K_j\ge1$ is so that $\ell_j\le\min(n,r_A)$.
The set $\mathscr{S}(A)$ is chosend so that it contains $A$ and is a disjoint union of $A_j$. This
can be achieved since if there is a non-empty intersection of some ${A}_j$ with some
other cylinder ${A}_k$, then, say, $\ell_j<\ell_k$ which implies that ${A}_k\subset{A}_j$.
It is then sufficient to retain ${A}_j$ and to omit ${A}_k$.
In order to estimate $\mu({A}_j)$ put $\lambda_{{A}_j}=\op^{\ell_j}h\chi_{{A}_j}$.
Then $\lambda_{{A}_j}(x)=\frac{h(y)}{JT^{\ell_j}(y)}$, where $y\in{A}_j$
is such that $T^{\ell_j}y=x$, and $x$ is any point in $\Omega_0$.
Since by \cite{Y3} Sublemma~2
$$
\left|\log\frac{JT^{\ell_j}(y)}{JT^{\ell_j}(y')}\right|\le c_1 \;\;\; \forall \; y,y'\in{A}_j,
$$
for some $c_1$, and as the density $h\in{\cal C}_\gamma$ is positive, we get
$$
\left|\log\frac{\lambda_{{A}_j}(x)}{\lambda_{{A}_j}(x')}\right|\le c_2 \;\;\; \forall \; x,x'\in\Omega_0,
$$ 
and thus $|\lambda_{{A}_j}|_\infty\in[\frac1{c_3},c_3]\frac1{JT^{\ell_j}(y)}\;\;\; \forall \; y\in{A}_j$.
As a consequence 
$\nu({A}_j)$ is similarly comparable to$\frac1{JT^{\ell_j}(y)}\;\;\; \forall \; y\in{A}_j$
as $T^{\ell_j}:{A}_j\rightarrow\Omega_0$ is one-to-one ($c_3>0$) as $\ell_j=R({A}_j)$.
One also has $|\lambda_{{A}_j}|_\infty\le c_4\mu({A}_j)$.
Clearly $\{\tau_A\le\Delta\}\subset\bigcup_{\ell=r_A}^\Delta T^{-\ell}A$ and thus
$$
\mu(A\cap\{\tau_A\le\Delta\})\le\sum_{\ell=r_A}^\Delta\mu(A\cap T^{-\ell}A),
$$
where we can estimate as follows for $\ell\ge\ell_j$
\begin{eqnarray*}
\mu(A\cap T^{-\ell}A)&\le&\sum_j\mu({A}_j\cap T^{-\ell}A)\\
&=&\sum_j\int_{T^{-(\ell-\ell_j)}A}\lambda_{{A}_j}\,d\nu\\
&\le&\sum_j|\lambda_{{A}_j}|_\infty\nu(T^{-(\ell-\ell_j)}A)\\
&\le&c_5\sum_j\mu({A}_j)\mu(A).
\end{eqnarray*}
Since $\mu(\mathscr{S}(A))=\sum_j\mu({A}_j)$ we obtain
$$
b_i^+=\mu_A(\{\tau_A\le\Delta\})\le \sum_{\ell=r_A}^\Delta\frac{\mu(A\cap T^{-\ell}A)}{\mu(A)}
\le c_5\Delta\mu(\mathscr{S}(A))
$$
and thus 
$$
R_1(a)\le b_i^++b_i^-\le2 c_5\Delta\mu(\mathscr{S}(A))
$$
for all $a\in[0,m]$

\vspace{3mm}

\noindent\textbf{Estimate of $R_2$: } Here the decay of correlations play a central role.
For $ \tilde{W_m^{i}}(x)=W_m^{i, -}(x) + W_m^{i, +}(x)$ we obtain as in 
Proposition~\ref{mixingtheorem}
$$
R_2(a)=\sum_{\substack{\vec{a}=(a^-,a^+)\\ \text{s.t } |\vec{a}|=a}}
\left(\Pro\left(\{W_m^{i,\pm}=a^\pm\}
\cap T^{-i}A\right)-\Pro\left(W_m^{i,\pm}=a^\pm\right)\mu(A)\right)
$$
where $a^-+a^+=a$. As before we split the summands into three separate parts
$R_{2,1}, R_{2,2}, R_{2,3}$ which we sum over $a$ and bound separately as follows.

\noindent \textbf{Bounds for $R_{2,1}$: }
The mixing of sets formula~(\ref{young.mixing1}) gives us
\begin{eqnarray*}
R_{2,1}(a^-,a^+)&=&
\mu\left(\{W_m^{i,\pm}=a^\pm\}\cap T^{-i}A\right)
-\mu\left(\{W_m^{i,+}=a^+\}\cap T^{-i}A\right)\mu\left(W_m^{i,-}=a^-\right)\\
&=&\int_{Y_{a^+}}
\left(\op^{\Delta-n}\lambda_{a^-}-h\mu(X_{a^-})\right)\,d\nu,
\end{eqnarray*}
where $\lambda_{a^-}=\op^{i+n}h\chi_{X_{a^-}}$, $X_{a^-}=\{W_m^{i,-}=a^-\}$
and $Y_{a^+}=T^{\Delta-n}(\{W_m^{i,+}=a^+\}\cap T^{-i}A)$.
According to Lemma~\ref{bound1}
$\|\lambda_{a^-}\|_\gamma\le C_6$ for any value of $a^-, i, m$ and $n$.
Thus, summing over $a=0,\dots,m$, we obtain
\begin{eqnarray*}
\left|\sum_{a=0}^m f(a+1)R_{2,1}(a^-,a^+)\right|&\le&\sum_{a^-,a^+}
\left|f(a^-+a^++1)\int_{Y_{a^+}}\left(\op^{\Delta-n}\lambda_{a^-}-h\mu(X_{a^-})\right)\,d\nu\right|\\
&\le&\sum_{a^+=0}^m\sum_{a^-=0}^m|f(a^-+a^++1)|\varepsilon_{a^-,a^+}
\int_{Y_{a^+}}\left(\op^{\Delta-n}\lambda_{a^-}-h\mu(X_{a^-})\right)\,d\nu
\end{eqnarray*}
where $\varepsilon_{a^-,a^+}$ is the sign of the integral
$\int_{Y_{a^+}}\left(\op^{\Delta-n}\lambda_{a^-}-h\mu(X_{a^-})\right)\,dm$.
We now split the sum over $a^-, a^+$ in geometric progression and use the bounds
on $|f|$ from Lemma~\ref{logsum} to obtain
\begin{eqnarray*}
\left|\sum_{a=0}^m f(a+1)R_{2,1}(a^-,a^+)\right|&\le&
\sum_{k=0}^{[\log_22m]}\sum_{a^-,a^+=0}^{[2m2^{-k}]}\frac{2+t}{a^-+a^++1}
\varepsilon_{a^-,a^+}
\int_{Y_{a^+}}\left(\op^{\Delta-n}\lambda_{a^-}-h\mu(X_{a^-})\right)\,d\nu\\
&&+\sum_{a^-,a^+=0}^{[t]}
\varepsilon_{a^-,a^+}
\int_{Y_{a^+}}\left(\op^{\Delta-n}\lambda_{a^-}-h\mu(X_{a^-})\right)\,d\nu.
\end{eqnarray*}
The first (triple) sum is estimated by $I+II$, where $I$ is for the terms with $\varepsilon=+1$
and $II$ contains the terms for which $\varepsilon=-1$. For every $k$ we use the fact that 
$\frac{2+t}{a^-+a^++1}\le \frac{2+t}{m2^{-k}}$ for $a^-+a^+\in[m2^{-k}, m2^{-(k-1)})$. Hence
\begin{eqnarray*}
I&=&\sum_{k=0}^{[\log_22m]}\frac{2+t}{m2^{-k}}\sum_{a^+=0}^{[2m2^{-k}]}
\sum_{\substack{a^-\in[0,2m2^{-k}]\\ \text{s.t } \varepsilon_{a^-,a^+}=1}}
\int_{Y_{a^+}}\left(\op^{\Delta-n}\lambda_{a^-}-h\mu(X_{a^-})\right)\,d\nu\\
&=&\sum_{k=0}^{[\log_22m]}\frac{2+t}{m2^{-k}}\sum_{a^+=0}^{[2m2^{-k}]}
\int_{Y_{a^+}}\left(\op^{\Delta-n}L_{k,a^+,1}-h\mu(\tilde{X}_{a^+,1})\right)\,d\nu
\end{eqnarray*}
(notice that all terms are positive), where 
$$
L_{k,a^+,1}=\sum_{\substack{a^-\in[0,2m2^{-k}]\\ \text{s.t } \varepsilon_{a^-,a^+}=1}}\lambda_{a^-}
=\op^{i+n}\chi_{\tilde{X}_{a^+,1}}
$$
 and 
$\tilde{X}_{a^+,1}=\bigcup_{\substack{a^-\in[0,2m2^{-k}]\\ \text{s.t } \varepsilon_{a^-,a^+}=1}}X_{a-}$ 
is a disjoint union in $\sigma({\cal A}^{i+n})$. Hence by Lemma~\ref{bound1} we have
$\|L_{k,a^+,1}\|_\gamma\le C_6$ for all values of $a^+, i, n$. We thus obtain
\begin{eqnarray*}
I&\le&\sum_{k=0}^{[\log_22m]}\frac{2+t}{m2^{-k}}\sum_{a^+=0}^{[2m2^{-k}]}
\|L_{k,a^+,1}\|_\gamma p(\Delta-n)\\
&\le&C_6 \sum_{k=0}^{[\log_22m]}\frac{2+t}{m2^{-k}} 2m2^{-k} p(\Delta-n)\\
&\le& c_6 (2+t)p(\Delta-n)\log m .
\end{eqnarray*}
Similarly one estimates the second contribution $II$ by putting
$L_{k,a^+,2}=\sum_{\substack{a^-\in[0,2m2^{-k}]\\ \text{s.t } \varepsilon_{a^-,a^+}=-1}}\lambda_{a^-}
=\op^{i+n}\chi_{\tilde{X}_{a^+,2}}$ where 
$\tilde{X}_{a^+,2}$ is the disjoint union 
$\bigcup_{\substack{a^-\in[0,2m2^{-k}]\\ \text{s.t } \varepsilon_{a^-,a^+}=-1}}X_{a-}$. 
We then get as above in estimating the part $I$ (again for every $k$ we estimate 
$|f(a+1)|\le  \frac{2+t}{m2^{-k}}$ for $a^-+a^+\in[m2^{-k}, m2^{-(k-1)})$):
\begin{eqnarray*}
II&=&\sum_{k=0}^{[\log_22m]}\frac{2+t}{m2^{-k}}\sum_{a^+=0}^{[2m2^{-k}]}
\sum_{\substack{a^-\in[0,m2^{-k}]\\ \text{s.t } \varepsilon_{a^-,a^+}=-1}}
-\int_{Y_{a^+}}\left(\op^{\Delta-n}\lambda_{a^-}-h\mu(X_{a^-})\right)\,d\nu\\
&=&\sum_{k=0}^{[\log_22m]}\frac{2+t}{m2^{-k}}\sum_{a^+=0}^{[2m2^{-k}]}
-\int_{Y_{a^+}}\left(\op^{\Delta-n}L_{k,a^+,2}-h\mu(\tilde{X}_{a^+,2})\right)\,d\nu\\
&\le&\sum_{k=0}^{[\log_22m]}\frac{2+t}{m2^{-k}}\sum_{a^+=0}^{[2m2^{-k}]}
\|L_{k,a^+,2}\|_\gamma p(\Delta-n)\\
&\le& c_6 (2+t)p(\Delta-n)\log m
\end{eqnarray*}
as $\|L_{k,a^+,2}\|_\gamma\le C_6$ by Lemma~\ref{bound1}.

In the same way one estimates the second sum above which does not involve
a sum over $k$:
$$
\sum_{a^-,a^+=0}^{[t]}
\varepsilon_{a^-,a^+}
\int_{Y_{a^+}}\left(\op^{\Delta-n}\lambda_{a^-}-h\mu(X_{a^-})\right)\,d\nu
\le C_6t p(\Delta-n).
$$
These estimates combined yield ($c_7\le2c_6+C_6$)
$$
\left|\sum_{a=0}^m f(a+1)R_{2,1}(a^-,a^+)\right|\le c_7 (2+t)p(\Delta-n)\log m.
$$

\noindent \textbf{Bounds for $R_{2,2}$: }
Here we get
\begin{eqnarray*}
R_{2,2}(a^-,a^+)&=&\left(\mu\left(\{W_m^{i,+}=a^+\}\cap T^{-i}A\right)
-\mu\left(W_m^{i,+}=a^+\right)\mu(A)\right)\mu(W_m^{i,-}=a^-)\\
&=&\mu\left(W_m^{i,-}=a^-\right)\int_{T^{\Delta-n}\{W_m^{i,+}=a^+\}}
\left(\op^{\Delta-n}\lambda_*-h\mu(A)\right)\,d\nu
\end{eqnarray*}
where  $\lambda_*=\op^{i+n}h\chi_{T^{-i}A}$
and therefore we get the following estimate which is independent of the value of $a$:
\begin{eqnarray*}
\left|\sum_{a^-+a^+=a}R_{2,2}(a^-,a^+)\right|&\le& 
\sum_{\substack{\vec{a}=(a^-,a^+)\\ \text{s.t } |\vec{a}|=a}}
\mu(W_m^{i,-}=a^-)\left|\mu\left(\{W_m^{i,+}=a^+\}\cap T^{-i}A\right)-
\mu(W_m^{i,+}=a^+)\mu(A)\right|\\
&\le&\sum_{a^+}
\int_{T^{\Delta-n}\{W_m^{i,+}=a^+\}}
\left|\op^{\Delta-n}\lambda_*-h\mu(A)\right)|\,d\nu\\
&\le&\int_{T^{\Delta-n}\bigcup_{a^+}\{W_m^{i,+}=a^+\}}
\left|\op^{\Delta-n}\lambda_*-h\mu(A)\right)|\,d\nu\\
&\le&C_6 p(\Delta-n)
\end{eqnarray*}
again using the fact that  for different $a^+$ the sets $T^{\Delta-n}\{W_m^{i,+}=a^+\}$ are
disjoint in $\sigma(\bigcup_{\ell=i}^\infty{\cal A}^\ell)$.

\noindent \textbf{Bounds for $R_{2,3}$: }
We proceed as in the estimates about  $R_{2,1}$. Put 
$$
R_{2,3}(a^-,a^+)=\mu(A)\left(\mu\left(W_m^{i,+}=a^+\right)\mu\left(W_m^{i,-}=a^-\right)
-\mu\left(W_m^{i,\pm}=a^\pm\right)\right)
$$
and we obtain in the same way that
$$
\left|\sum_{0\le a^-+a^+\le m}f(a^-+a^++1)R_{2,3}(a^-,a^+)\right|\le c_7\mu(A)(2+t)p(2\Delta-n)\log m.
$$

Combining the estimates for $R_{2,1}, R_{2,2}$ and $R_{2,3}$ we obtain that ($c_8\le2c_7+C_6$)
$$
\left|\sum_{a=0}^mf(a+1)R_2(a)\right|\le c_8(2+t)p(\Delta-n)\log m.
$$
On the other hand, using the estimates on $R_1$ and $R_3$ together with the Lemma~\ref{logsum} we get
$$
\left|\sum_{a=0}^mf(a+1)(R_1(a)+R_3(a))\right|
\le\Delta(2\mu(A)+c_5\mu(\mathscr{S}(A)))\sum_{a=0}^m|f(a+1)|
\le\Delta(2\mu(A)+c_5\mu(\mathscr{S}(A)))(t+(2+t)\log\frac{m}t)
$$
Hence 
 \begin{eqnarray*}
\left|\sum_af(a+1)\xi_a\right|&\le& \left|\sum_{a=0}^mf(a+1)(R_1(a)+R_2(a)+R_3(a))\right|\\
&\le&2\Delta\mu(A)(\mu(A)+c_5\mu(\mathscr{S}(A)))\left(t+(2+t)\log\frac{m}t\right)+c_8(2+t)p(\Delta-n)\log m
\end{eqnarray*}
if $m>t$, and therefore
$$
\left|\Pro(W_m\in E)-\mu_0(E)\right|\le
c_9\Delta\mu(\mathscr{S}(A))\left(t+(2+t)\left|\log\mu(A)\right|\right)
+c_8(2+t)\frac{p(\Delta-n)}{\mu(A)}\log m
$$
as $m=[t/\mu(A)]$ for some $c_8,c_9\in\mathbb R^+$ independent of $A$.
\end{proof} 


\noindent {\bf Proof of Theorem~\ref{young.poisson.exponential}.}
 Optimising the error terms requires  the gaps
 $\Delta=(\mu(\mathscr{S}(A_n))\mu(A_n))^{\frac1{1+\beta}}$.
 We now look at different decay rates, namely the two cases when 
(i) $\mu(A_n)$ decays polynomially and (ii) $\mu(A_n)$ decays exponentially.

\vspace{2mm}

\noindent {\bf (i)} If the target set $A_n$ has polynomially decaying measure, 
$\mu(A_n)\sim n^{-\kappa}$ and $\mu(\mathscr{S}(A_n))\sim n^{-\lambda}$,
then if $p(k)={\cal O}(k^{-\beta})$ and  the gaps $\Delta$
are of the order $n^\frac{\kappa+\lambda}{\beta+1}$
 (where $\kappa/\lambda<\beta$ implies that $\Delta<\!\!\!<m=[t/\mu(A_n)]$). 
If $p(k)={\cal O}(\tilde\vartheta^k)$ is exponentially decaying then the best choice for the gaps
is $\Delta\sim n+\log n$. Hence
$$
  \left\{\begin{array}{lcl}
  p(k)={\cal O}(k^{-\beta})&\Rightarrow&
  \left|\Pro(W_m\in E)-\mu_0(E)\right|\le c_1n^{-\frac{\beta\lambda-\kappa}{\beta+1}}\\
  p(k)={\cal O}(\tilde\vartheta^k)& \Rightarrow&
   \left|\Pro(W_m\in E)-\mu_0(E)\right|\le c_1n^{-(\lambda-1)}
  \end{array} \right.
$$
for some $c_1$.

\vspace{2mm}

\noindent {\bf (ii)} In the case when the return set
 $A_n$ has exponentially decaying measure, $\mu(A_n)\le e^{-Kn}$
 (e.g. single $n$-cylinders) and $\mu(\mathscr{S}(A_n))\le e^{-Ln}$ then
 Theorem~\ref{young.poisson}  implies in the polynomial case $p(k)\sim k^{-\beta}$:
$$
\left|\Pro(W_m\in E)-\mu_0(E)\right|\le c_1(t\vee1)\Delta\mu(\mathscr{S}(A_n))
\le c_2e^{-\frac\beta{1+\beta}L+\frac1{\beta+1}K}\le c_2e^{-G n},
$$
where $G=\frac1{\beta+1}(\beta L-K)$ and in the exponential case $p(k)\sim \tilde\vartheta^k$:
$$
\left|\Pro(W_m\in E)-\mu_0(E)\right|\le c_3(t\vee1)\mu(\mathscr{S}(A_n))\log n
\le c_3e^{-G n},
$$
for  any $G <L$.
\hfill\qed


\end{document}